\newtheorem{theorem}{Theorem}[section]
\newtheorem{lemma}[theorem]{Lemma}
\newtheorem{cor}[theorem]{Corollary}
\newtheorem{ex}[theorem]{Example}
\newtheorem{defn}[theorem]{Definition}
\newtheorem{remark}[theorem]{Remark}
\begin{document}

\title{Length Functions for Semigroup Embeddings}
\author{Tara Davis}

\maketitle

\begin{abstract}
Following the work done in \cite{olshanskii} for groups, we describe, for a given semigroup $S$, which functions $l : S \rightarrow \mathbb{N}$ can be realized up to equivalence as length functions $g \mapsto |g|_{H}$ by embedding $S$ into a finitely generated semigroup $H$. We also, following the work done in \cite{olsh2} and \cite{os}, provide a complete description of length functions of a given finitely generated semigroup with enumerable set of relations inside a finitely presented semigroup.
\end{abstract}

\let\thefootnote\relax\footnotetext{{\bf Keywords:} Membership problem, Word problem, Embeddings of Semigroups, Length Function, Distortion.}
\let\thefootnote\relax\footnotetext{{\bf Mathematics Subject Classification 2000:} 20M05, 20F65.}

\section{Introduction}
\subsection{Preliminaries}
Let $S$ be an arbitrary semigroup (without signature identity element) with a finite generating set $\mathcal{A}=\{ a_1, \dots, a_m \}$. 

\begin{defn} The length of an element $g \in S$ is $|g|=|g|_{\mathcal{A}}$ is the length of the shortest word over the alphabet $\mathcal{A}$ which represents the element $g$, where for any word $W$ in $\mathcal{A}$ we define its length $||W||$ to be the number of letters in $W$.
\end{defn}

Observe that if the semigroup $S$ is embedded into another finitely generated semigroup $H$ with a generating system $\mathcal{B}=\{ b_1, \dots, b_k \}$, then for any $g \in S$ we have
\begin{equation}
|g|_{\mathcal{B}} \leq c|g|_{\mathcal{A}}
\label{one} 
\end{equation} with the constant $c = \max \{ |a_1|_{\mathcal{B}}, \dots,  |a_m|_{\mathcal{B}} \}$ independent of $g$. Motivated by inequality (\ref{one}), we introduce the following notion of equivalence.

\begin{defn}\label{equiv}
Let $l_1, l_2 : S \rightarrow \mathbb{N} = \{ 1, 2, 3, \dots \}$. We say that $l_1$ and $l_2$ are equivalent, $l_1 \approx l_2$, if there exist constants $c_1,c_2>0$ such that $$c_1l_1(g) \leq l_2(g) \leq c_2l_1(g)$$ for all $g \in S$. 
\end{defn}

The discussion above implies that the word length in $S$ does not depend up to equivalence on the choice of finite generating set.

We will also be considering a semigroup analogue of the notion of distortion. This idea was first introduced for groups by Gromov, in \cite{gromov}. We say that an embedding of one semigroup $H$ with finite generating system $\mathcal{B}$ into another semigroup $R$ with finite generating system $\mathcal{T}$ is undistorted if $$(|\cdot|_{\mathcal{T}}) \downharpoonright_H \approx |\cdot|_{\mathcal{B}}.$$ Otherwise, the embedding is distorted. The notion is clearly independent of the choice of finite generating sets $\mathcal{B}$ and $\mathcal{T}$.

\subsection{Statement of Main Results}

The main goal of this note is to prove an analog of Theorem 1 in \cite{olshanskii} for semigroups.
The necessary conditions for distortion functions of semigroups are as follows. The main result of this article is the sufficiency of said conditions.

\begin{lemma}\label{lm1}
Let $S$ be a semigroup and $l: S \rightarrow \mathbb{N}$ a function defined by some embedding of the semigroup $S$ into a semigroup $H$ with a finite generating system $\mathcal{B}=\{ b_1, \dots, b_k \}$; that is, $l(g) = |g|_{\mathcal{B}}$. Then
\begin{itemize}

\item [(D1)] $l(gh) \leq l(g)+l(h)$ for all $g,h \in S$;
\item [(D2)] There exists a positive number $a$ such that $\textrm{card} \{ g \in S : l(g) \leq r \} \leq a^r$ for any $r \in \mathbb{N}$.
\end{itemize}
\end{lemma}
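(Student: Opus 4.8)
The plan is to verify the two conditions directly from the definition of $l(g) = |g|_{\mathcal{B}}$, the word length with respect to the finite generating system $\mathcal{B} = \{b_1, \dots, b_k\}$ of $H$. Both properties are essentially elementary consequences of how word length behaves, so I expect the proof to be short; the only genuine care needed is bookkeeping with the convention $\mathbb{N} = \{1, 2, 3, \dots\}$ and the fact that $S$ embeds into $H$.

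For condition (D1), I would argue as follows. Since the embedding of $S$ into $H$ is a semigroup homomorphism that is injective, the product $gh$ in $S$ maps to the corresponding product in $H$, so it suffices to prove subadditivity of word length in $H$. If $U$ is a shortest word over $\mathcal{B}$ representing $g$ and $V$ is a shortest word over $\mathcal{B}$ representing $h$, with $\|U\| = |g|_{\mathcal{B}}$ and $\|V\| = |h|_{\mathcal{B}}$, then the concatenation $UV$ is a word over $\mathcal{B}$ representing $gh$ of length $\|U\| + \|V\|$. Since $|gh|_{\mathcal{B}}$ is the length of the \emph{shortest} such word, we get $l(gh) = |gh|_{\mathcal{B}} \leq \|UV\| = |g|_{\mathcal{B}} + |h|_{\mathcal{B}} = l(g) + l(h)$, which is exactly (D1).

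For condition (D2), the idea is to count words rather than elements. Every element $g \in S$ with $l(g) \leq r$ is represented by some word over $\mathcal{B}$ of length at most $r$; fix for each such $g$ one such representing word. Since the embedding is injective, distinct elements of $S$ receive distinct images in $H$, and I would note that this chosen-word assignment $g \mapsto W_g$ is injective on the set $\{g \in S : l(g) \leq r\}$ because two elements with the same representing word are equal in $H$ and hence equal in $S$. Therefore the cardinality of this set is bounded by the number of nonempty words over $\mathcal{B}$ of length at most $r$, which is $\sum_{j=1}^{r} k^j$. I would then bound this geometric sum by a single exponential: taking $a = k + 1$ (or any constant exceeding $k$) gives $\sum_{j=1}^{r} k^j \leq a^r$ for all $r$, establishing (D2) with an explicit constant.

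The main conceptual point — and the only place one could slip up — is recognizing that injectivity of the embedding is what licenses us to transfer both the submultiplicative counting bound and the subadditivity back from $H$ to $S$; without it the map $g \mapsto W_g$ need not be injective and (D2) could fail. Since both arguments are standard counting and concatenation estimates, I do not anticipate a substantial obstacle, only the need to state the constants ($c$ in (D1) is implicitly $1$, and $a$ in (D2)) cleanly.
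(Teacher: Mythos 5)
Your proposal is correct and follows essentially the same route as the paper: (D1) by concatenating shortest words, and (D2) by taking $a = k+1$ and bounding the number of words over $\mathcal{B}$ of length at most $r$. The paper simply states these facts without the bookkeeping you supply, so your write-up is just a more detailed version of the same argument.
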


\begin{proof}
The condition $(D1)$ is obvious. To prove the condition $(D2)$ it will suffice to take $a=k+1$. This follows because the number of all words in $\mathcal{B}$ having length $\leq r$ is not greater than $(k+1)^r$. 
\end{proof}

We establish the notation that the $(D)$ condition refers to conditions $(D1)$ and $(D2)$ of Lemma \ref{lm1}.

\begin{theorem}\label{t1}
\begin{enumerate}
\item For any semigroup and any function $l : S \rightarrow \mathbb{N}$ satisfying the $(D)$ condition, there is an embedding of $S$ into a $2$-generated semigroup $H$ with generating set $\mathcal{B} = \{ b_1, b_2 \}$, such that the function $g \rightarrow |g|_{\mathcal{B}}$ is equivalent to the function $l$.
\item For any semigroup $S$ and any function $l: S \rightarrow \mathbb{N}$ satisfying the $(D)$ condition, there is an embedding of $S$ into a finitely generated semigroup $K$ with finite generating set $\mathcal{C}$ such that the function $g \rightarrow |g|_{\mathcal{C}}$ is equal to the function $l$.
\end{enumerate}
\end{theorem}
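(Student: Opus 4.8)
The plan is to prove statement (2) first — realizing $l$ \emph{exactly} by a finitely generated semigroup — and then to derive statement (1) by a constant-distortion recoding into a $2$-generated target, where only equivalence (not equality) of length functions is required and where separators may therefore be used freely. Observe first that condition (D2) forces $S$ to be countable, since $S=\bigcup_{r\in\mathbb{N}}\{g\in S: l(g)\le r\}$ is a countable union of finite sets; this is what makes a finite-alphabet coding possible in the first place. Throughout, condition (D1) will be used to supply \emph{lower} bounds on length (it says $l$ is subadditive, hence compatible with the semigroup operation), while (D2) will be used to keep the coding alphabet finite.

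For (2) the first step is to build a length-respecting code. Fix an integer $a\ge 2$ for which (D2) holds (enlarging the constant if necessary) and put $D=\{d_1,\dots,d_a\}$. Since $S_n=\{g: l(g)=n\}$ has at most $a^n$ elements and there are exactly $a^n$ words of length $n$ over $D$, I can fix an injection $\phi\colon S\to D^{+}$ with $\phi(g)\in D^{\,l(g)}$ for every $g$ (it is injective across distinct lengths automatically). Because $\phi$ is only a map of sets I cannot simply declare $g=\phi(g)$; instead I will form $K$ as the quotient of the free semigroup on $D$ modulo the relations $\phi(g)\,\phi(h)=\phi(gh)$ for all $g,h\in S$. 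Under these relations $\bar\phi(g)=[\phi(g)]$ is a semigroup homomorphism $S\to K$, and since $\phi(g)$ has length $l(g)$ we get $|\bar\phi(g)|_{\mathcal C}\le l(g)$ for free, which is the desired upper bound.

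The crux — and the step I expect to be the main obstacle — is the matching lower bound $|\bar\phi(g)|_{\mathcal C}\ge l(g)$ together with injectivity of $\bar\phi$. Both can fail if code words overlap: a factor of $\phi(g)$ could accidentally be a left-hand side $\phi(g')\phi(h')$ of a defining relation, producing either a length-shortening rewrite or an unwanted identification. For equality I cannot pad the codewords with separator letters, as that would destroy $\|\phi(g)\|=l(g)$; so the tension to resolve is that exact length wants to use short words densely, while overlap-freeness wants sparsity. I would reconcile this by enlarging $D$ to a larger finite alphabet (still finite by (D2)) so as to create slack, and then selecting, at each length $n$, at least $a^n$ codewords of length exactly $n$ that together form a comma-free (overlap-free) code, treating the finitely many small lengths by hand. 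With the codes overlap-free, the rules $\phi(g)\phi(h)\to\phi(gh)$ form a rewriting system that is length-non-increasing — this is exactly where (D1), $l(gh)\le l(g)+l(h)$, enters — and after checking that redex overlaps resolve it should be confluent and terminating. Its irreducible normal forms then realize geodesics, no relation shortens the already-irreducible word $\phi(g)$, and distinct elements retain distinct normal forms, giving simultaneously $|\bar\phi(g)|_{\mathcal C}=l(g)$ and injectivity, which completes (2).

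Finally, for (1) I would recode into two letters, trading equality for equivalence, where separators are now harmless. Writing $K$ (equivalently $S$) over its finite generating alphabet $\mathcal C=\{c_1,\dots,c_N\}$, I embed it into $H=\langle b_1,b_2\rangle$ by sending $c_i$ to a self-delimiting block such as $b_1 b_2^{\,i} b_1$, and I check the embedding is undistorted: the blocks are mutually non-overlapping, so a geodesic $\mathcal B$-word for an element of $S$ decodes to a $\mathcal C$-word of proportional length. Undistortedness gives $(|\cdot|_{\mathcal B})\!\downharpoonright_S\ \approx\ |\cdot|_{\mathcal C}=l$, which is precisely the equivalence demanded in (1). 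Again (D2) is what keeps $\mathcal C$ finite so that the block-coding into two letters exists; alternatively one may bypass $K$ and code $S$ directly in base two, paying a constant factor in length that the equivalence in (1) absorbs.
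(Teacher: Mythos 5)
Your overall machinery is the same as the paper's -- encode each $g\in S$ by a word of controlled length drawn from an overlap-free (``comma-free'') exponential set, impose only the multiplication-table relations $\phi(g)\phi(h)=\phi(gh)$, and get the lower bound $|g|\ge l(g)$ from unique parsing plus subadditivity $(D1)$ -- but your decomposition of the two parts is genuinely different. The paper proves Part~1 directly by building a two-letter exponential overlap-free set (words $b_1^3Vb_2^3$, Lemma~\ref{kul}) with $l(g)\le\|X_g\|<d\,l(g)$, and then reruns the whole argument over a larger alphabet to get the exact equality of Part~2 (Lemma~\ref{ewo}). You instead prove the exact version first and then derive Part~1 from it by an undistorted recoding of a finitely generated semigroup into a $2$-generated one. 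That factorization is cleaner and more modular: it isolates ``exact realization over a big alphabet'' from ``$2$-generation costs only a multiplicative constant,'' and the latter is a reusable standalone fact. What the paper's route buys is that it never has to prove that recoding lemma separately: the binary exponential set does double duty.

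Two soft spots to repair. First, the steps you defer -- existence of a comma-free code with enough words of each exact length, and confluence of the length-non-increasing rewriting system $\phi(g)\phi(h)\to\phi(gh)$ -- are precisely the paper's Lemmas~\ref{kul}, \ref{lo}, \ref{lw} and \ref{ewo}; the confluence/critical-pair analysis is exactly the statement that any occurrence of a block product inside a block product aligns with block boundaries, so it does go through, but it is the real content and cannot be left at ``should be confluent.'' (Also mind the quantitative bookkeeping: a code of the form $c_1vc_{a+2}$ gives only $a^{n-2}$ words of length $n$, so the middle alphabet must be enlarged to beat $a^n$, and lengths $1$ and $2$ need reserved letters.) Second, your claim that the blocks $b_1b_2^{\,i}b_1$ are ``mutually non-overlapping'' is literally false in the sense of Definition~\ref{d1}: a suffix $b_1$ of one block is a prefix of another. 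The recoding still works because the double letter $b_1b_1$ pins down block boundaries, but that alignment argument has to be made explicitly; as stated, the undistortedness of the $2$-letter recoding is asserted rather than proved, and it is exactly an instance of the same parsing lemma you need in Part~2.
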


\begin{cor}\label{c1}
\begin{enumerate}
\item Let $g$ be an element such that $g$ generates as infinite subsemigroup in a semigroup $H$ with finite generating set $\mathcal{B}=\{ b_1, \dots, b_k \}$;  i.e. $\textrm{card}\{g^n\}_{n \in \mathbb{N}}=\infty$. Denote $l(i)=|g^i|_{\mathcal{B}}=|g^i|$ for $i \in \mathbb{N}$. Then
\begin{itemize}

\item [(C1)] $l(i+j) \leq l(i)+l(j)$ for all $i,j \in \mathbb{N}$ ($l$ is subadditive);
\item [(C2)] There exists a positive number $a$ such that $\textrm{card} \{ i \in \mathbb{N} : l(i) \leq r \} \leq a^r$ for any $r \in \mathbb{N}$.
\end{itemize}
\item For any function $l : \mathbb{N} \rightarrow \mathbb{N}$, satisfying conditions $(C1)$ and $(C2)$, there is a $2$-generated semigroup $H$ and an element $h \in H$ such that $|h^i|_H \approx l(i)$.
\item For any function $l : \mathbb{N} \rightarrow \mathbb{N}$, satisfying conditions $(C1)$ and $(C2)$, there is a finitely generated semigroup $K$ and an element $k \in K$ such that $|k^i|_K = l(i)$.
\end{enumerate}
\end{cor}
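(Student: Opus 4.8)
The plan is to deduce all three parts directly from the results already in hand by specializing to the free monogenic semigroup, so that the hard constructive work is entirely absorbed into Theorem \ref{t1}.

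For part (1), I would first note that since $g$ generates an infinite subsemigroup, the map $i \mapsto g^i$ from $\mathbb{N}$ into $H$ is injective. Condition (C1) is then simply (D1) of Lemma \ref{lm1} applied to the elements $g^i$ and $g^j$: indeed $l(i+j) = |g^{i+j}| = |g^i g^j| \leq |g^i| + |g^j| = l(i) + l(j)$. For (C2), injectivity gives a bijection between $\{ i \in \mathbb{N} : l(i) \leq r \}$ and the set $\{ g^i : |g^i|_{\mathcal{B}} \leq r \}$, which is contained in $\{ h \in H : |h|_{\mathcal{B}} \leq r \}$; hence $\textrm{card}\{ i : l(i) \leq r \} \leq \textrm{card}\{ h \in H : |h|_{\mathcal{B}} \leq r \} \leq a^r$ by (D2), with the same constant $a$.

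For parts (2) and (3), I would take $S$ to be the free monogenic semigroup $\langle x \rangle = \{ x, x^2, x^3, \dots \}$, which is infinite, and define $L : S \rightarrow \mathbb{N}$ by $L(x^i) = l(i)$. This is well defined because each element of $S$ is uniquely a power of $x$. I would then verify that $L$ satisfies the $(D)$ condition: (D1) holds since $L(x^i x^j) = L(x^{i+j}) = l(i+j) \leq l(i) + l(j) = L(x^i) + L(x^j)$ by (C1), and (D2) holds since $\textrm{card}\{ s \in S : L(s) \leq r \} = \textrm{card}\{ i \in \mathbb{N} : l(i) \leq r \} \leq a^r$ by (C2). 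Thus $L$ meets the hypotheses of Theorem \ref{t1}.

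Applying Theorem \ref{t1} then finishes both parts at once. Part (1) of the theorem yields a $2$-generated semigroup $H$ and an embedding $S \hookrightarrow H$ for which the restriction of $|\cdot|_{\mathcal{B}}$ to $S$ is equivalent to $L$; taking $h$ to be the image of $x$ gives $|h^i|_H = |x^i|_{\mathcal{B}} \approx L(x^i) = l(i)$, which is part (2). Likewise part (2) of the theorem gives a finitely generated semigroup $K$ with an embedding realizing $|\cdot|_{\mathcal{C}}$ on $S$ exactly equal to $L$; the image $k$ of $x$ then satisfies $|k^i|_K = l(i)$, which is part (3). Since the embeddings are injective and $S$ is infinite, $h$ and $k$ automatically generate infinite subsemigroups, consistent with the hypothesis of part (1). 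I do not expect a genuine obstacle here: the only real content beyond Theorem \ref{t1} is the routine observation that recasting $l$ on $\mathbb{N}$ as $L$ on $\langle x \rangle$ converts (C1) and (C2) into (D1) and (D2), and the hard embedding construction is already supplied by the theorem.
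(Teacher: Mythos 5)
Your proposal is correct and is exactly the intended derivation: the paper offers no separate proof of Corollary \ref{c1}, presenting it as an immediate consequence of Lemma \ref{lm1} and Theorem \ref{t1}, which is precisely what you carry out by restricting to the cyclic subsemigroup for part (1) and applying the theorem to the free monogenic semigroup with $L(x^i)=l(i)$ for parts (2) and (3). Your observation that an infinite monogenic subsemigroup forces $i \mapsto g^i$ to be injective is the one small point that needs saying, and you say it.
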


We observe that the main result of \cite{olsh2} also holds for semigroups. 

\begin{theorem}\label{two}
Let $l$ be a computable function with properties $(D1)-(D2)$ on a semigroup $S$. Suppose further that $S$ has enumerable set of defining relations. Then $S$ can be isomorphically embedded into some finitely presented semigroup $R$ in such a way that the function $l$ is equivalent to the restriction of $|\textrm{  } |_R$ to $S$.
\end{theorem}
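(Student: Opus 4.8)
The plan is to factor the embedding $S \hookrightarrow R$ through an intermediate finitely generated semigroup and then apply a distortion-controlled version of the semigroup Higman embedding theorem. First I would invoke Theorem~\ref{t1}(1): since $l$ satisfies the $(D)$ condition, there is an embedding $\iota \colon S \hookrightarrow H$ into a $2$-generated semigroup $H$ with generating set $\mathcal{B}$ for which $|\iota(g)|_{\mathcal{B}} \approx l(g)$ for all $g \in S$. Passing to $H$ reduces the problem to a \emph{finitely generated} semigroup, where the target length function is, up to equivalence, the word length $|\cdot|_{\mathcal{B}}$ itself.

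The second step is to check that $H$ is recursively presented, i.e.\ generated by $\mathcal{B}$ subject to a recursively enumerable set of defining relations. This is where both computability hypotheses enter: $S$ has an enumerable set of defining relations, and $l$ is computable. Inspecting the construction underlying Theorem~\ref{t1}, the relations of $H$ split into those inherited from a presentation of $S$ and those introduced by the embedding machinery to force $|\iota(g)|_{\mathcal{B}}$ to track $l(g)$. The former family is enumerable by hypothesis, and the latter is produced effectively from the algorithm computing $l$; hence the full set of defining relations of $H$ is recursively enumerable, so $H$ is a finitely generated, recursively presented semigroup.

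The crux is the third step: a quasi-isometric Higman--Murskii embedding for semigroups. Here one embeds the finitely generated recursively presented semigroup $H$ into a finitely presented semigroup $R$ so that the embedding is \emph{undistorted}, namely $(|\cdot|_R)\!\downharpoonright_H \approx |\cdot|_{\mathcal{B}}$. One inequality is free: because $R$ is finitely generated and contains the generators $\mathcal{B}$, inequality (\ref{one}) gives $(|\cdot|_R)\!\downharpoonright_H \leq c\,|\cdot|_{\mathcal{B}}$. The substance is the reverse bound, that the embedding does not \emph{compress} lengths, so that a long element of $H$ must remain long in $R$. Following \cite{olsh2} and the techniques of \cite{os}, one encodes the Turing machine enumerating the relations of $H$ by a string-rewriting ($S$-machine-type) system, builds $R$ from the generators of $H$ together with auxiliary letters simulating the machine's computations, and arranges the defining relations so that any $R$-reduction witnessing an equality of a word over $\mathcal{B}$ must pass through a computation whose length is bounded below linearly in the $H$-length of the element. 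Establishing this lower bound --- showing the machine cannot produce short certificates for long elements --- is the main obstacle, and is exactly where the semigroup adaptation of the length and area estimates of \cite{os} must be carried out.

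Finally I would compose, $S \hookrightarrow H \hookrightarrow R$, and read off $(|\cdot|_R)\!\downharpoonright_S \approx (|\cdot|_{\mathcal{B}})\!\downharpoonright_S \approx l$. Thus $R$ is the required finitely presented semigroup and the restriction of its word length to $S$ is equivalent to $l$. The only genuinely hard ingredient is the distortion-free semigroup Higman embedding of the third step; the remaining steps are bookkeeping resting on Theorem~\ref{t1} and the effectivity of the hypotheses.
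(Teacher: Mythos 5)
Your overall architecture coincides with the paper's: Theorem \ref{two} is obtained by composing the embedding $S \hookrightarrow H$ of Theorem \ref{t1}, Part 1, with an undistorted embedding of the finitely generated, recursively presented semigroup $H$ into a finitely presented semigroup $R$, the computability of $l$ and the enumerability of the relations of $S$ entering only to verify that $H$ is recursively presented. The divergence is in how the third step is established. You propose to build a fresh embedding by simulating the enumerating Turing machine with a string-rewriting system in the style of \cite{os}, and you correctly identify the lower bound on lengths (no compression) as the essential difficulty --- but you leave that estimate unproved, so as written this step is a programme rather than a proof. The paper takes a much lighter route (Theorem \ref{three}): it uses Murskii's original embedding \cite{murskii} unchanged and observes that it is automatically undistorted, because Murskii's Lemma 3.3 decomposes any word $W$ over the generators of $R$ that equals $P \in H$ as $W \equiv P_0U_1P_1\cdots U_lP_l$ with $P_0R_1P_1\cdots R_lP_l$ equal to $P$ in $H$ and $||R_i|| \leq ||U_i||$ for each $i$, whence $|P|_{\mathcal{B}} \leq ||W||$ follows by summing. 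So the paper's version of your third step needs no new machinery and no area estimates; to complete the argument along your own lines you would have to carry out the semigroup analogue of the length estimates of \cite{os}, which is substantially harder than what is actually required.
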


This Theorem will be proved in Section \ref{fourth}.

\begin{ex}
Because the function $l:\mathbb{N} \rightarrow \mathbb{N}: i \mapsto \lceil i^{\pi-e} \rceil$ is computable ($\pi$ and $e$ being computable numbers) and satisfies the $(D)$ condition, we have by Theorem \ref{two} that there exists a finitely presented semigroup $R$ and an element $r \in R$ such that $|r^i|_R \approx l(i)$.
\end{ex}

Theorem \ref{two} fails to provide a complete description of length functions of a given finitely generated semigroup with enumerable set of relations inside finitely presented semigroups. In \cite{os}, the corresponding question was answered for groups, by extending the $(D)$ condition. We obtain a semigroup analog of the main result in \cite{os} as follows. 

We use the notation that $F_m$ is an absolutely free semigroup of rank $m$. Given an $m$-generated semigroup $S$, and a function $l:S \rightarrow \mathbb{N}$, we may obtain the natural lift function $l^{*}:F_m \rightarrow \mathbb{N}.$

\begin{defn}
Let $S$ be an $m$-generated semigroup, and $l:S \rightarrow \mathbb{N}$. We say that $l$ satisfies condition $(D3)$ if there exists a natural number $n$ and a recursively enumerable set $T \subset F_m \times F_n$ such that 
\begin{enumerate}
\item $(v_1, u), (v_2, u) \in T$ for some words $v_1,v_2,u$ then $v_1$ and $v_2$ represent the same element in $S$.
\item If $v_1$ and $v_2$ represent the same element in $S$ then there exists an element $u$ such that $(v_1,u),(v_2,u) \in T$.
\item $l^{*}(v)=\min\{||u|| : (v,u) \in T\}$ for every $v \in F_m$.
\end{enumerate}
\end{defn}

\begin{theorem}\label{d4}
Let $S$ be a finitely generated subsemigroup of a finitely presented semigroup $H$. Then the corresponding length function on $S$ satisfies conditions $(D1)-(D3)$. Conversely, for every finitely generated semigroup $S$ and function $l:S \rightarrow \mathbb{N}$ satisfying conditions $(D1)-(D3)$, there exists an embedding of $S$ into a finitely presented semigroup $H$ such that the length function $g \rightarrow |g|_H$ is equivalent to $l$, in the sense of Definition \ref{equiv}.
\end{theorem}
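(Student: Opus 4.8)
The plan is to treat the two implications separately, since the forward direction is essentially bookkeeping while the converse carries all the difficulty.

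For the forward implication, suppose $S$ is generated by $a_1,\dots,a_m$, viewed as elements of $H$, and that $H$ is finitely presented on generators $b_1,\dots,b_k$. Conditions $(D1)$ and $(D2)$ are immediate from Lemma \ref{lm1}, since $l = (|\cdot|_{\{b_j\}})\downharpoonright_S$ is the length function of an embedding into a finitely generated semigroup. For $(D3)$ I would take $n=k$ and let $\sigma \colon F_m \to F_k$ be the substitution replacing each $a_i$ by a fixed word $w_i$ over the $b_j$ representing it in $H$. Define $T \subseteq F_m \times F_k$ to consist of all pairs $(v,u)$ with $\sigma(v)$ and $u$ representing the same element of $H$. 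Because $H$ is finitely presented, equality of words in the $b_j$ is recursively enumerable and $\sigma$ is computable, so $T$ is r.e. The three clauses then read off directly: clause~1 holds because equality in $H$ restricted to $S$ is equality in $S$; clause~2 because any common value is named by some word $u$ over the $b_j$; and clause~3 because $\min\{\|u\| : (v,u)\in T\}$ is exactly $|g|_H = l(g) = l^*(v)$, where $g$ is the element represented by $v$.

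For the converse, fix $l$ satisfying $(D1)$--$(D3)$ together with the r.e.\ set $T \subseteq F_m \times F_n$ it provides, and let $M$ be a machine enumerating $T$. The strategy follows \cite{os}: first build a recursively presented semigroup $P \supseteq S$ in which $l$ is realized up to equivalence, using $T$ to furnish short \emph{codes}, and then invoke a Higman-type embedding of $P$ into a finitely presented semigroup while keeping the length function undistorted. Concretely, I would adjoin to the generators of $S$ a code alphabet of size $n$ together with auxiliary letters that simulate $M$, and relations allowing one to convert a word $v \in F_m$ into any code $u$ with $(v,u) \in T$ at a cost proportional to $\|u\|$. Clauses~1 and~2 of $(D3)$ guarantee that codes respect and detect equality in $S$, so the natural map $S \to P$ stays injective, while clause~3 pins the optimal code length to $l^*(v)$. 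The upper bound $|g|_P \leq C\,l(g)$ is then the easy half: given $g$, pick a minimal code $u$ of length $l(g)$ and route through it. The finitely presented target $H$ is obtained by simulating the enumeration of $M$ inside a finite presentation, in the spirit of the semigroup Higman embedding theorem and the S-machine arguments of \cite{os}, thereby converting the r.e.\ data of $T$ into finitely many relations.

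The delicate point, which I expect to be the main obstacle, is the lower bound $|g|_H \geq c\,l(g)$: one must show that no word over the finite generating set of $H$ can represent $g$ using substantially fewer than $l(g)$ letters. This is where the geometry of the construction matters — any representing word must, in effect, exhibit a valid code $u$ for $g$, and by clause~3 such a code has length at least $l^*(v) = l(g)$. Establishing this requires a careful diagram and normal-form analysis of the machine simulation to exclude \emph{shortcuts} that would compress the code, and it is the semigroup analog of the distortion estimates forming the technical core of \cite{os}. The remaining steps are routine by comparison: verifying that the auxiliary machinery creates no new identifications among elements of $S$, and observing that the finiteness of the code alphabet is consistent with the growth bound — indeed clauses~1 and~3 of $(D3)$ already force $\mathrm{card}\{g : l(g) \leq r\} \leq (n+1)^r$, so condition $(D2)$ is automatically in harmony with the coding.
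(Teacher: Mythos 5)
Your forward direction is essentially the paper's argument: the same set $T=\{(v,u): v(a_1,\dots,a_m)=u(b_1,\dots,b_n)\ \textrm{in}\ H\}$, the same appeal to recursive enumerability of equality in a finitely presented semigroup, and the same reading-off of the three clauses. That half is fine.

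The converse is where the genuine gap lies. Your proposal has the right two-step skeleton --- first realize $l$ up to equivalence inside a recursively presented semigroup, then pass to a finitely presented one without distortion --- but neither step is actually executed, and the step you yourself identify as the crux (the lower bound $|g|_H \geq c\,l(g)$, i.e.\ excluding shortcuts) is deferred to an unspecified ``diagram and normal-form analysis of the machine simulation.'' That analysis is precisely the content that a proof must supply, and importing the S-machine technology of \cite{os} wholesale into the semigroup setting is not a routine matter. The paper avoids machines entirely at this stage: the recursively presented intermediate semigroup is built by coding the words $u$ (from the $(D3)$ data, via the projection $U$ of $T$ and the selector $\phi$) into an exponential set of words over $\{x_1,x_2\}$ satisfying the overlap property (Lemmas \ref{kul} and \ref{li}, with $\psi$ chosen recursive). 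The overlap property forces any word of $F_2$ equal in $H$ to a product of code words to itself be such a product (Lemmas \ref{lo} and \ref{lw}), which yields the lower bound by pure word combinatorics plus $(D1)$ and clause~3 of $(D3)$ --- exactly the argument of Theorem \ref{t1} Part~1. The passage from recursively presented to finitely presented is then handled once and for all by Theorem \ref{three}: Murskii's embedding is shown to be undistorted using his Lemma~3.3 decomposition $W\equiv P_0U_1P_1\cdots U_lP_l$, so no new distortion estimate is needed at that stage. Until you either carry out the machine-simulation distortion analysis you allude to, or substitute a concrete mechanism like the overlap-property coding plus an undistorted Murskii/Higman embedding, the converse direction remains a program rather than a proof. (Your closing observation that clauses~1 and~3 of $(D3)$ force $\textrm{card}\{g: l(g)\leq r\}\leq (n+1)^r$ is correct but tangential.)
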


This Theorem will be proved in Section \ref{fourth}.

When $S$ has solvable word problem, the condition $(D3)$ can be replaced by a simpler condition. 

\begin{defn}
The graph of a function $l^{*}:F_m \rightarrow \mathbb{N}$ is the set $\{(w,l^{*}(w)) : w \in F_m\}$. A pair $(w,k)$ is said to lie above the graph of $l^{*}$ if $l^{*}(w) \leq k$. 
\end{defn}

We observe that the following result of \cite{os} also holds in the semigroup setting. In fact, the proof uses no special properties of groups such as existence of identity element or inverses and so goes through immediately and directly.

\begin{theorem}\label{hib}
Let $S$ be a finitely generated semigroup with decidable word problem. Then the function $l:g \mapsto |g|_H$ given by an embedding of $S$ into a finitely presented semigroup $H$ satisfies the conditions $(D1)-(D2)$ as well as the following condition:\\

\vspace{.05in}
$(D3')$ The set of pairs above the graph of $l^{*}$ is recursively enumerable. \\

\vspace{.05in}
Conversely, for every function $l: S \rightarrow \mathbb{N}$ satisfying $(D1),(D2)$ and $(D3')$, there exists an embedding of $S$ into a finitely presented semigroup $H$ such that the corresponding length function on $S$ is equivalent (in the sense of Definition \ref{equiv}) to $l$.
\end{theorem}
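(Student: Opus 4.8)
The plan is to prove Theorem \ref{hib} by reducing it to the (D1)--(D3) framework of Theorem \ref{d4}, since that result is already available to us and does most of the heavy lifting. The forward direction (necessity) should follow by essentially the same argument used to establish (D1)--(D2) in Lemma \ref{lm1} together with the necessity half of Theorem \ref{d4}: if $S$ embeds in the finitely presented semigroup $H$, then $H$ has a solvable word problem on the subsemigroup generated by the images of the generators (or at least the relevant length computations are semi-decidable), and one can enumerate all pairs $(w,k)$ with $l^{*}(w)\le k$ by searching through the finitely many relations of $H$. Concretely, to verify that $(w,k)$ lies above the graph, one runs a parallel search over all words $u$ of length $\le k$ in the generators of $H$, checking whether $u =_H w$; because $H$ is finitely presented this equality is at worst semi-decidable, and when $S$ has decidable word problem the bookkeeping needed to confirm $l^{*}(w)\le k$ becomes a clean recursive enumeration. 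This yields $(D3')$.

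The substance is the converse. The key observation, which the paper already signals, is that when $S$ has decidable word problem, condition $(D3')$ actually \emph{implies} condition $(D3)$, so we can then simply invoke the converse half of Theorem \ref{d4} to obtain the desired embedding into a finitely presented semigroup. So the real work is constructing, from the recursively enumerable set
$$
G = \{(w,k) \in F_m \times \mathbb{N} : l^{*}(w) \le k\},
$$
a recursively enumerable set $T \subset F_m \times F_n$ satisfying properties (1)--(3) of the definition of $(D3)$. First I would fix $n=1$ (or any convenient $n$) so that words $u \in F_n$ are just encodings of natural numbers, with $\|u\|$ recording the number $k$; then the min-over-$u$ in property (3) is exactly the min-over-$k$ computation, and $l^{*}(v) = \min\{\|u\| : (v,u)\in T\}$ becomes the statement $l^{*}(v) = \min\{k : (v,k)\in G\}$, which holds by definition of $G$ provided we put the right pairs in $T$.

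The subtle point, and what I expect to be the main obstacle, is property (2) of $(D3)$: if $v_1$ and $v_2$ represent the same element of $S$, we must produce a \emph{common} second coordinate $u$ with $(v_1,u),(v_2,u) \in T$. With the naive choice $T = \{(v, 1^{k}) : (v,k)\in G\}$ one gets property (3) but not the coupling required by (1) and (2), because equal elements $v_1 =_S v_2$ have $l^{*}(v_1)=l^{*}(v_2)$ yet membership of a pair in $T$ carries no witness that $v_1$ and $v_2$ are identified in $S$. To repair this I would exploit decidability of the word problem in $S$: since equality $v_1 =_S v_2$ is now decidable, I can enlarge the encoding so that the second coordinate records both the length value and a canonical form (for instance, the shortest-length representative of the $S$-class, chosen recursively using the decision procedure and a fixed well-ordering of $F_m$). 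Defining $T = \{(v, \langle \overline{v}, k\rangle) : l^{*}(v)\le k\}$, where $\overline{v}$ is the canonical representative of the class of $v$ and $\langle \cdot,\cdot\rangle$ is a length-monotone pairing whose word-length still reads off $k$ up to the fixed equivalence of Definition \ref{equiv}, simultaneously enforces (1) (a shared second coordinate forces a shared canonical form, hence $v_1 =_S v_2$), (2) (equal elements share their canonical form, so a common $u$ exists), and (3) (the minimum recovers $l^{*}$). Verifying that this $T$ is recursively enumerable uses the decidability of the word problem together with the recursive enumerability of $G$ from $(D3')$; then Theorem \ref{d4} delivers the embedding, and since that theorem already gives equivalence in the sense of Definition \ref{equiv} rather than equality, the small distortion introduced by the pairing encoding is harmless.
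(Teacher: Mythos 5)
Your overall strategy is the standard one (and the one the paper implicitly relies on by citing \cite{os}): the forward direction follows from semi-decidability of equality in the finitely presented semigroup $H$, and the converse should be obtained by showing that $(D3')$ together with decidability of the word problem in $S$ implies $(D3)$, and then invoking Theorem \ref{d4}. The forward half of your argument is fine. The gap is in your construction of the set $T$. You propose second coordinates of the form $\langle \overline{v},k\rangle$, where $\overline{v}$ is the shortest word in $F_m$ representing the class of $v$, and you assert that a ``length-monotone pairing'' can be chosen so that $\|\langle\overline{v},k\rangle\|$ is equivalent to $k$. This cannot work: for property $(1)$ you need the second coordinate to determine $\overline{v}$, so any pairing in the sense you describe has $\|\langle\overline{v},k\rangle\| \geq \|\overline{v}\| = |g|_{\mathcal{A}}$, the intrinsic length of $g$ in $S$; meanwhile $k$ can be as small as $l^{*}(v)=l(g)$. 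The ratio $|g|_{\mathcal{A}}/l(g)$ is exactly the distortion, and it is unbounded in the cases this theorem is designed to handle, so no constants witnessing the equivalence $\|\langle\overline{v},k\rangle\| \approx k$ exist. There is also a secondary problem you dismiss too quickly: Theorem \ref{d4} requires property $(3)$ of $(D3)$ to hold with exact equality for the function $l$ itself, and passing to an equivalent function is not harmless there, because condition $(D1)$ (subadditivity) is not preserved under the equivalence of Definition \ref{equiv}.

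The repair requires condition $(D2)$, which your proposal never uses. By $(D2)$ there are at most $a^{k}$ elements $g\in S$ with $l(g)\leq k$, hence at most $a^{k}$ canonical forms $c=\overline{c}$ with $l^{*}(c)\leq k$. Take $n\geq a$, so $F_n$ has at least $a^{k}$ words of length exactly $k$. Now enumerate the set $G$ of pairs above the graph of $l^{*}$ given by $(D3')$; each time a pair $(v,k)$ appears, compute $\overline{v}$ using the decision procedure for the word problem, and if the canonical form $c=\overline{v}$ has not yet been seen together with this value of $k$, assign to $(c,k)$ the next unused word $u_{c,k}\in F_n$ of length exactly $k$; then enumerate into $T$ all pairs $(v',u_{c,k})$ with $\overline{v'}=c$ (a decidable condition). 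One checks that distinct pairs $(c,k)$ receive distinct words, that a shared second coordinate forces a shared canonical form (property $(1)$), that equal elements share all their witnesses (property $(2)$), and that $\min\{\|u\|:(v,u)\in T\}=\min\{k:(\overline{v},k)\in G\}=l^{*}(v)$ exactly (property $(3)$). With this replacement your reduction to Theorem \ref{d4} goes through; without it, the key step fails precisely for distorted length functions.
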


The following Corollary follows from Theorem \ref{hib} and reminds us of the statement of Corollary \ref{c1}.

\begin{cor}
\begin{enumerate}
\item Let $g$ be an element generating an infinite subsemigroup in a finitely presented semigroup $H$ with generating set $\mathcal{B}=\{b_1, \dots, b_k\}$. Denote $l(i)=|g^{i}|_{\mathcal{B}}=|g^i|$ for $i \in \mathbb{N}$. Then
\begin{itemize}

\item [(C1)] $l(i+j) \leq l(i)+l(j)$ for all $i,j \in \mathbb{N}$ ($l$ is subadditive);
\item [(C2)] There exists a positive number $a$ such that $\textrm{card} \{ i \in \mathbb{N} : l(i) \leq r \} \leq a^r$ for any $r \in \mathbb{N}$.
\item [(C3)] The set of natural pairs above the graph of $l$ is recursively enumerable.
\end{itemize}
\item Conversely, For any function $l : \mathbb{N} \rightarrow \mathbb{N}$, satisfying conditions $(C1)-(C3)$, there is a finitely presented semigroup $H$ and an element $g \in H$ such that $|g^i|_H \approx l(i)$.
\end{enumerate}
\end{cor}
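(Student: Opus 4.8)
The plan is to apply Theorem \ref{hib} to the monogenic subsemigroup $S = \langle g \rangle$. Since a monogenic semigroup is either finite or free (infinite cyclic), the hypothesis that $g$ generates an infinite subsemigroup forces $S \cong \mathbb{N}$, the free monogenic semigroup with single generator $x$ and elements $x^1, x^2, x^3, \dots$ identified with $1, 2, 3, \dots$. In particular $S$ is $1$-generated and its word problem is decidable: the words $x^i$ and $x^j$ represent the same element precisely when $i = j$. Under this identification, the length function $g^i \mapsto |g^i|_H$ is exactly the function $l(i)$, and since $F_1 \cong \mathbb{N}$ with $S$ already free, the natural lift $l^{*}$ coincides with $l$.

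For the first assertion, I would observe that conditions $(D1)$, $(D2)$ and $(D3')$ for the length function on $S \cong \mathbb{N}$ translate verbatim into $(C1)$, $(C2)$ and $(C3)$, respectively: $(D1)$ reads $l(i+j) \leq l(i) + l(j)$, which is $(C1)$; $(D2)$ reads $\textrm{card}\{i \in \mathbb{N} : l(i) \leq r\} \leq a^r$, which is $(C2)$; and since $l^{*} = l$, the set of pairs above the graph of $l^{*}$ is the set of natural pairs above the graph of $l$, so $(D3')$ is exactly $(C3)$. Thus the forward direction of Theorem \ref{hib}, applied to $S$, yields $(C1)$--$(C3)$ directly.

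For the converse, given $l : \mathbb{N} \to \mathbb{N}$ satisfying $(C1)$--$(C3)$, I would regard $\mathbb{N}$ as the free monogenic semigroup $S$ generated by $x$, and $l$ as a length function on $S$ via $x^i \mapsto l(i)$. By the same translation, $(C1)$--$(C3)$ give exactly $(D1)$, $(D2)$ and $(D3')$ for $S$, which has decidable word problem as noted above. Applying the converse direction of Theorem \ref{hib} produces an embedding $\iota : S \hookrightarrow H$ into a finitely presented semigroup $H$ with $(|\cdot|_H)\downharpoonright_S \approx l$. Setting $g = \iota(x)$, injectivity of $\iota$ guarantees that $g$ generates an infinite subsemigroup, and $|g^i|_H = |\iota(x^i)|_H \approx l(i)$, as required.

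The content here is entirely a matter of translation, so I do not anticipate a substantial obstacle; the only points requiring care are recording that an infinite monogenic semigroup is free (so that $S \cong \mathbb{N}$ has decidable word problem, the hypothesis needed to invoke Theorem \ref{hib}), and that the natural lift $l^{*}$ on $F_1$ agrees with $l$, which is precisely what makes $(D3')$ and $(C3)$ literally the same condition.
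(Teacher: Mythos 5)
Your proposal is correct and matches the paper's intended argument: the paper offers no separate proof, stating only that the Corollary follows from Theorem \ref{hib}, and your derivation (identifying the infinite monogenic subsemigroup $\langle g\rangle$ with the free semigroup of rank one, noting its decidable word problem and that $l^{*}=l$ so that $(D1),(D2),(D3')$ become $(C1)$--$(C3)$) is exactly the translation the paper leaves implicit.
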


\section{Exponential Sets of Words}

\begin{defn}
Let $\mathcal{X}$ be a set of words over the alphabet \\ $\mathcal{A} = \{ a_1, \dots, a_m \}$. We call $\mathcal{X}$ exponential if there are constants $N$ and $c>1$ such that $$\textrm{card}\{X \in \mathcal{X} : ||X|| \leq i \} \geq c^i$$ for every $i \geq N$.
\end{defn}

\begin{defn}\label{d1}

A collection $\mathcal{Y}$ of words satisfies the overlap property if whenever $Y, Z \in \mathcal{Y}$ we have that 
\begin{equation}\label{e1}
Y \textrm{ is not a proper subword of } Z \textrm{ and }
\end{equation}
\begin{equation}\label{e2}
U \textrm{ nonempty, }Y \equiv UV \textrm{ and } Z \equiv WU \textrm{ implies }Y \equiv U \equiv Z
\end{equation}
where $\equiv$ represents letter-for-letter equality.
\end{defn}

\begin{lemma}\label{kul}
There exists an exponential set of words in the alphabet $\{b_1,b_2\}$ satisfying the overlap property of Definition \ref{d1}.
\end{lemma}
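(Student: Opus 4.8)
The plan is to exhibit the set explicitly, using a triple-letter prefix marker $b_1b_1b_1$ and a triple-letter suffix marker $b_2b_2b_2$ separated by a constrained ``body''. Concretely, I would define
\[
\mathcal{X} = \{\, b_1b_1b_1\, m\, b_2b_2b_2 \;:\; m \text{ begins with } b_2,\ m \text{ ends with } b_1,\ m \text{ has no factor } b_1b_1b_1 \text{ or } b_2b_2b_2 \,\}.
\]
The point of the boundary conditions on $m$ is that they force, inside each $W \equiv b_1b_1b_1\, m\, b_2b_2b_2$, the factor $b_1b_1b_1$ to occur \emph{only} as the initial segment and the factor $b_2b_2b_2$ to occur \emph{only} as the terminal segment: requiring $m$ to start with $b_2$ prevents a shifted copy of $b_1b_1b_1$ at the front junction, requiring $m$ to end with $b_1$ prevents a shifted copy of $b_2b_2b_2$ at the back junction, and forbidding the two cubes as factors of $m$ rules out interior copies.

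Next I would verify the overlap property of Definition \ref{d1} from this ``unique marker'' feature. For condition \eqref{e2}, suppose $U$ is nonempty, $Y \equiv UV$ and $Z \equiv WU$ with $Y,Z \in \mathcal{X}$. Every nonempty prefix of $Y$ begins with $b_1$ while every nonempty suffix of $Z$ ends with $b_2$, so $|U|=1$ would force $b_1 = b_2$ and $|U|=2$ would force $b_1b_1 = b_2b_2$; hence $|U| \geq 3$. Then $U$ begins with $b_1b_1b_1$ (being a length-$\geq 3$ prefix of $Y$) and ends with $b_2b_2b_2$ (being a length-$\geq 3$ suffix of $Z$). Since $U$ is a suffix of $Z$ carrying a copy of $b_1b_1b_1$, and the only copy of $b_1b_1b_1$ in $Z$ is its prefix, we get $U \equiv Z$; symmetrically, $U$ being a prefix of $Y$ carrying the unique terminal copy of $b_2b_2b_2$ gives $U \equiv Y$. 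Thus $Y \equiv U \equiv Z$. Condition \eqref{e1} is the same idea: if $Z \equiv PYQ$ with $PQ$ nonempty, the prefix $b_1b_1b_1$ of $Y$ and the suffix $b_2b_2b_2$ of $Y$ would appear as interior occurrences of the markers in $Z$ unless $P$ and $Q$ are both empty, a contradiction.

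For exponential growth I would produce an explicit injection from $\{0,1\}^k$. Using the blocks $B_0 \equiv b_1b_2$ and $B_1 \equiv b_1b_1b_2$, map a bit string $(i_1,\dots,i_k)$ to $m \equiv b_2\, B_{i_1}\cdots B_{i_k}\, b_1$. Each such $m$ meets all the constraints (every maximal run has length at most $2$, it begins with $b_2$, and it ends with $b_1$), and since $\{B_0,B_1\}$ is prefix-free the blocks decode uniquely, so distinct bit strings give distinct $m$, hence distinct $W \in \mathcal{X}$. As $\|m\| \leq 3k+2$ we have $\|W\| = \|m\|+6 \leq 3k+8$, so every word coming from $\{0,1\}^k$ with $k = \lfloor (i-8)/3\rfloor$ has length $\leq i$; therefore $\mathrm{card}\{X \in \mathcal{X} : \|X\| \leq i\} \geq 2^{\lfloor (i-8)/3\rfloor}$, which exceeds $c^i$ for any $1 < c < 2^{1/3}$ and all sufficiently large $i$. (Alternatively one can simply count binary strings with all runs of length $\leq 2$, whose number is a Fibonacci-type quantity growing like $\phi^{\ell}$.)

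The main obstacle is the design of the markers rather than any single verification step. The real tension is that the markers must be of \emph{different letter type} on the prefix and suffix sides, so that short overlaps ($|U|\leq 2$) are impossible, while being long enough that the body can avoid recreating them and still range over an exponentially large set: with length-$2$ markers the requirement ``no $b_1b_1$ and no $b_2b_2$ in the body'' forces the body to alternate, giving only one word of each length and hence merely linear growth, whereas length-$3$ markers allow runs up to length $2$ in the body and restore exponential richness. Once markers of length $3$ together with the begin/end conditions on $m$ are in place, both the overlap verification and the counting are routine.
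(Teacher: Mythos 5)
Your proposal is correct and is essentially the paper's own proof: you construct the very same set $\{b_1^3 V b_2^3 : V \equiv b_2 V' b_1,\ V \text{ cube-free in each letter}\}$, verify the overlap property via the same ``the markers $b_1^3$ and $b_2^3$ occur only at the ends'' argument, and establish exponential growth by injecting binary strings into the bodies (your prefix-free block encoding versus the paper's run-length parametrization is only a cosmetic difference in the count).
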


\begin{proof}
Consider the set $\mathcal{M}$ of all words $$\{ b_1^3Vb_2^3 : V \equiv b_2V'b_1 \textrm{ contains neither } b_1^3 \textrm{ nor } b_2^3 \textrm{ as a subword.}\}$$ This set does satisfiy the overlap property of Definition \ref{d1}. Condition (\ref{e1}) is satisfied because if $Y, Z \in \mathcal{M}$ and $Y$ is a subword of $Z \equiv W_1YW_2$, then we have that $b_1^3$ is a prefix of both $Y$ and $Z$. However, the only time that $b_1^3$ can occur in a word in $\mathcal{M}$ is at the very beginning. Therefore, $W_1$ is empty. Similarly, $W_2$ is empty. Condition (\ref{e2}) is satisfied because if $Y \equiv UV$ and $Z \equiv WU$ then the prefix of $U$ must be $b_1$ and the suffix of $U$ must be $b_2$, say $U=b_1U'b_2$. This implies that $b_1 U' b_2V \equiv Wb_1U'b_2 \equiv b_1^3V'b_2^3$ for some $V'$. Therefore, $U \equiv b_1^3 V''b_2^3$, for some $V''$ which implies that both $V$ and $W$ are empty.\\
We will verify that $\mathcal{M}$ is an exponential set. Consider the set $$M_i=\{x \in \mathcal{M} : ||x|| \leq i \}.$$ Consider a word $x \equiv b_1^3 b_2 b_2^{\beta_1} b_1^{\alpha_1} b_2^{\alpha_2} \cdots b_2^{\alpha_n}b_1^{\beta_2}b_1b_2^3$ where $\beta_j \in \{0,1\}$ for $j=1,2$ and $\alpha_j \in \{1,2\}$ for $j \in \{1, \dots, n\}$, and $n = \frac{i-10}{2}$. Such a word has $||x|| \leq 10+2n = i$ so $x \in M_i$. If $i>N=12$, then there exists $c>1$ satisfying $2^{\frac{i-6}{2i}}>c.$ This implies that $\textrm{card}(M_i) \geq 2^{\frac{i-6}{2}} \geq c^i$ for all $i \geq N$. 
\end{proof}

 \begin{lemma}\label{lo}
 Let $\mathcal{M}$ be an exponential set satisfying the overlap property. Suppose $V \equiv X_1X_2 \cdots X_t \equiv SY_1 Y_2 \cdots Y_mT$ where $m,t \geq 1$ and $X_n,Y_j \in \mathcal{M}$ for all $1 \leq n \leq t, 1 \leq j \leq m$. Then there exists an $i \leq t$ such that $S \equiv X_1 \cdots X_{i-1}, T \equiv X_{i+m} \cdots X_t$ and $Y_j \equiv X_{-1+i+j}$ for $j=1, \dots, m$.
 \end{lemma}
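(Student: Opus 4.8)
The plan is to show that the distinguished block $Y_1 Y_2 \cdots Y_m$ must begin and end exactly at cut points of the factorization $X_1 \cdots X_t$, and then that the two factorizations agree block-by-block in between. Throughout I would write $a_0 = 0$ and $a_j = \|X_1 \cdots X_j\|$ for $1 \le j \le t$, so that the prefix $X_1 \cdots X_j$ consists of the first $a_j$ letters of $V$, and I would set $p = \|S\|$, so that $Y_1$ begins immediately after the first $p$ letters of $V$. (Each $X_j$ and $Y_k$ is nonempty, being a member of the exponential set $\mathcal{M}$.) The argument then goes by induction on $m$.

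First I would locate the left endpoint of the block. Let $i$ be the index with $a_{i-1} \le p < a_i$; it exists and satisfies $i \le t$ because $Y_1$ is nonempty and begins inside $V$, so $p < a_t$. I claim $p = a_{i-1}$, i.e.\ $Y_1$ begins exactly where $X_i$ does. Suppose instead that $p > a_{i-1}$, and let $U$ be the suffix of $X_i$ occupying positions $p+1, \dots, a_i$ of $V$; then $U$ is nonempty (as $p < a_i$) and is a \emph{proper} suffix of $X_i$, say $X_i \equiv WU$ with $W$ nonempty (as $p > a_{i-1}$). Since $Y_1$ also begins at position $p+1$, there are two cases. If $\|Y_1\| \le a_i - p$, then $Y_1$ lies entirely inside $X_i$ but starts strictly after its first letter, so $Y_1$ is a proper subword of $X_i$, contradicting (\ref{e1}). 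If $\|Y_1\| > a_i - p$, then $U$ is a prefix of $Y_1$, say $Y_1 \equiv U V'$; applying (\ref{e2}) with $Y \equiv Y_1$ and $Z \equiv X_i$ forces $U \equiv X_i$, contradicting that $U$ is a proper suffix. Hence $p = a_{i-1}$, and comparing prefixes of $V$ of length $p$ gives $S \equiv X_1 \cdots X_{i-1}$.

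Next, since $X_i$ and $Y_1$ both begin at position $a_{i-1}+1$, the shorter of the two is a prefix of the longer; if their lengths differed this would exhibit one as a proper subword of the other, against (\ref{e1}), so $Y_1 \equiv X_i$. This handles the base case $m = 1$: cancelling $SY_1 \equiv X_1 \cdots X_i$ on the left leaves $T \equiv X_{i+1} \cdots X_t$, which is the assertion for $m=1$. For $m > 1$, cancelling the common prefix $X_1 \cdots X_i$ yields $X_{i+1} \cdots X_t \equiv Y_2 \cdots Y_m T$, an instance of the hypothesis with empty left factor and $m-1$ distinguished blocks. The inductive hypothesis then delivers $Y_j \equiv X_{(i+1)+(j-1)-1} = X_{i+j-1}$ for $2 \le j \le m$ together with $T \equiv X_{i+m} \cdots X_t$, and since $Y_1 \equiv X_i$ covers $j=1$, the induction closes.

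I expect the only genuine obstacle to be the left-endpoint alignment in the second paragraph, where the overlap property must be invoked with precisely the right word $U$ and the two sub-cases kept apart, so that (\ref{e1}) disposes of the possibility that $Y_1$ stays inside $X_i$ and (\ref{e2}) disposes of the possibility that it spills past the right end of $X_i$. Once that is in place, the remainder is bookkeeping on the lengths $a_j$ together with repeated left-cancellation, and the induction runs cleanly.
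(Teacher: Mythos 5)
Your proposal is correct and follows essentially the same route as the paper: locate the starting position of $Y_1$ inside some $X_i$, use condition (\ref{e1}) to rule out one word being a proper subword of the other and condition (\ref{e2}) to rule out a nontrivial overlap, conclude $Y_1 \equiv X_i$, and then propagate along the remaining blocks (the paper does this by repeating the argument for $Y_2, Y_3, \dots$; you package it as an induction on $m$ after cancelling the common prefix). Your version is simply a more careful writeup, with explicit position bookkeeping, of the argument the paper gives.
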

 
 \begin{proof}
 Because $X_1X_2 \cdots X_t \equiv SY_1 Y_2 \cdots Y_mT$ is letter-for-letter equality, we know that the first letter, $u$, in $Y_1$ also occurs in $X_i$ for some $i$. We proceed by considering cases. If $u$ is also the first letter in $X_i$ then either $X_i$ is a subword of $Y_1$ or vice-versa. In either of these cases, by condition (\ref{e1}), we have that $X_i \equiv Y_1$. Now suppose that $u$ is not the first letter in $X_i$. If $Y_1$ is a subword of $X_i$ then we apply condition (\ref{e1}) again. Otherwise, a suffix of $X_i$ must equal a prefix of $Y_1$, which implies by condition (\ref{e2}) that $X_i \equiv Y_1$. Now consider $Y_2$. We know that the first letter of $Y_2$ must also be the first letter of $X_{i+1}$. Therefore, one is a subword of the other, so by condition (\ref{e1}) we obtain that $Y_2 \equiv X_{i+1}$. The same argument shows that $Y_j \equiv X_{-1+i+j}$ for $j=1, \dots, m$ hence $T \equiv X_{i+m} \cdots X_t$ and $S \equiv X_1 \cdots X_{i-1}$.
 \end{proof}

\begin{lemma}\label{li}
Let $\mathcal{M}$ be an exponential set of words  over a finite alphabet $\{ a_1, \dots, a_m \}$. Then for a given function $l: S \rightarrow \mathbb{N}$ satisfying the $(D)$ condition, there is a constant $d=d(\mathcal{M},l)$ such that there exists an injection $S \rightarrow \mathcal{M}: g \mapsto X_g \in \mathcal{M}$ satisfying 
\begin{equation}\label{six}
l(g) \leq ||X_g|| < dl(g), g \in S.
\end{equation}
\end{lemma}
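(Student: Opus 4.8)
The plan is to build the injection $g \mapsto X_g$ by a counting argument, matching each $g$ to a word of $\mathcal{M}$ whose length lies in the window $[\,l(g),\, d\,l(g)\,)$. The whole point is that $\mathcal{M}$ grows like $c^{i}$ with $c>1$ (the exponential condition), while the balls $B_r := \{g \in S : l(g) \le r\}$ grow at most like $a^r$ (condition $(D2)$, which also guarantees each $B_r$ is finite). Introducing the multiplicative slack $d$ is exactly what lets the base $c^{d}$ overtake $a$ (and the alphabet size $m$), so that in each length window there will always be more eligible words than elements needing to be placed.

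Concretely, I would first record the key counting inequality. For a target value $r$, the number of words of $\mathcal{M}$ with length in $[r, dr)$ is
\[
\bigl|\{X \in \mathcal{M} : \|X\| \le dr-1\}\bigr| - \bigl|\{X \in \mathcal{M} : \|X\| \le r-1\}\bigr| \;\ge\; c^{\,dr-1} - m^{\,r},
\]
using the exponential lower bound on the first term (valid once $dr-1 \ge N$) and the trivial bound $m + m^2 + \dots + m^{r-1} \le m^{r}$ on the total number of words of length $< r$ over the $m$-letter alphabet for the second. Writing $q = \max(a,m)$ and choosing $d$ with $c^{d} > q$, the right-hand side exceeds $a^{r}$ for all sufficiently large $r$, since $(c^{d}/q)^{r} \to \infty$. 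The finitely many small values of $r$ are handled separately: for each fixed $r$ the window $[r, dr)$ captures arbitrarily many words of the infinite set $\mathcal{M}$ as $d$ grows, so enlarging $d$ one final time makes the count exceed the fixed finite number $|B_r|$ there too. This produces a single constant $d = d(\mathcal{M}, l)$ for which, for every $r \ge 1$,
\[
\bigl|\{X \in \mathcal{M} : r \le \|X\| < dr\}\bigr| \;\ge\; |B_r|.
\]

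With this inequality in hand I would define the injection greedily. Enumerate $S$ in non-decreasing order of $l$-value (possible since each $B_r$ is finite), fix an enumeration of $\mathcal{M}$, and when processing an element $g$ with $l(g)=r$ assign to it the first not-yet-used word of $\mathcal{M}$ whose length lies in $[r, dr)$. By construction this gives $l(g) \le \|X_g\| < d\,l(g)$, and distinct elements receive distinct words. To see that a free word always remains, note that every previously processed element lies in $B_r \setminus \{g\}$, so at most $|B_r|-1$ words have been used, whereas the displayed inequality supplies at least $|B_r|$ words of $\mathcal{M}$ in the window $[r, dr)$; hence at least one is unused. (Equivalently, one may verify Hall's condition for the locally finite bipartite graph joining $g$ to all $X \in \mathcal{M}$ with $l(g) \le \|X\| < d\,l(g)$ and invoke the marriage theorem, the critical sets again being the balls $B_r$.)

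The main obstacle I anticipate is not the asymptotic comparison but the uniformity in $r$: a single $d$ must simultaneously beat the exponential count for large $r$ and supply enough short words of $\mathcal{M}$ for the small values of $r$, where the exponential lower bound is unavailable and $\mathcal{M}$ may contain no words at all below some length. The delicate point is that enlarging $d$ to fix the finitely many small cases must not disturb the large-$r$ estimate; this is fine, since increasing $d$ only strengthens the inequality $c^{\,dr-1} \ge a^{r} + m^{r}$, but keeping the bookkeeping honest is the step that needs the most care.
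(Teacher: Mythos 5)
Your proof is correct. The paper itself gives no argument here --- it simply defers to the corresponding lemma in the cited reference \cite{olshanskii} --- and your counting-plus-greedy construction is essentially the standard argument from that source: compare the exponential lower bound $c^{\,dr-1}$ for words of $\mathcal{M}$ in the window $[r,dr)$ against the $a^r$ bound from $(D2)$, enlarge $d$ once more to cover the finitely many small $r$, and assign words greedily in order of increasing $l$-value. Your observation that at most $|B_r|-1$ words can have been consumed before an element with $l(g)=r$ is processed is exactly the point that makes the greedy step go through, so the bookkeeping you were worried about is in order.
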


\begin{proof}
A proof can be found in \cite{olshanskii} for the case where words are considered in a positive alphabet and hence it holds for semigroups as well. 
\end{proof}

\section{Constructing the Embedding}\label{constructing}

We begin by fixing some notation. Let $\mathcal{M}$ be the exponential set of words in the alphabet $\mathcal{B}=\{b_1, b_2\}$ obtained in Lemma \ref{kul}. Let $S$ be a semigroup and $l:S \rightarrow \mathbb{N}$ a function satisfying the $(D)$ condition. Let $d=d(\mathcal{M},l)$ and $\mathcal{X}=\{X_g\}_{g \in S} \subset \mathcal{M}$ be the constant and exponential subset guaranteed by Lemma \ref{li} and satisfying the inequality (\ref{six}).

The semigroup $S$ is a homomorphic image of the free semigroup $F_S$ with basis $\mathcal{A}=\{x_g\}_{g \in S}$ under the epimorphism $\varepsilon: x_g \mapsto g$. Let $\rho = \ker(\varepsilon)$. Therefore, $S \cong F_S/\rho$, and $\rho$ provides all relations which hold in $S$. Let $$R=\{(x_h,x_{h'}x_{h''}):h=h'h'' \textrm{ in } S\}.$$ Then $R$ represents the relations of $S$ arising from its multiplication table. The proof of the following Lemma is elementary so we omit it.

\begin{lemma}\label{lp}
The semigroup $S$ has presentation $\langle \mathcal{A} | R \rangle$.
\end{lemma}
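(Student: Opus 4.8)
The plan is to show that the congruence on $F_S$ generated by the relation set $R$ coincides with $\rho = \ker(\varepsilon)$; since $S \cong F_S/\rho$ by the definition of $\varepsilon$, this at once yields $S \cong \langle \mathcal{A} \mid R \rangle$. Write $\rho_R$ for the congruence generated by $R$, and abbreviate $(a,b) \in \rho_R$ by $a \mathrel{\rho_R} b$. One inclusion is routine: every defining relation already holds in $S$ under $\varepsilon$, because if $(x_h, x_{h'}x_{h''}) \in R$ with $h = h'h''$ in $S$, then $\varepsilon(x_h) = h = h'h'' = \varepsilon(x_{h'})\varepsilon(x_{h''}) = \varepsilon(x_{h'}x_{h''})$. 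Hence $R \subseteq \rho$, and since $\rho$ is itself a congruence containing $R$, the congruence it generates satisfies $\rho_R \subseteq \rho$.

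The substantive direction is $\rho \subseteq \rho_R$, and the heart of it is a reduction step: every word $W \equiv x_{g_1} x_{g_2} \cdots x_{g_n} \in F_S$ is $\rho_R$-equivalent to the single generator $x_g$, where $g = \varepsilon(W) = g_1 g_2 \cdots g_n \in S$. To see this, put $h = g_1 g_2$ in $S$; then $(x_h, x_{g_1}x_{g_2}) \in R$, so $x_{g_1}x_{g_2} \mathrel{\rho_R} x_h$, and because $\rho_R$ is closed under multiplication on both sides we may apply this relation to the leading subword of $W$ in context, obtaining $W \mathrel{\rho_R} x_h x_{g_3} \cdots x_{g_n}$, a word of length $n-1$ whose $\varepsilon$-image is still $g$. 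Iterating this collapse $n-1$ times reduces $W$ to a single letter $x_{g_1 g_2 \cdots g_n} = x_g$. With the reduction in hand the inclusion is immediate: if $(W_1, W_2) \in \rho$, i.e.\ $\varepsilon(W_1) = \varepsilon(W_2) = g$, then $W_1 \mathrel{\rho_R} x_g \mathrel{\rho_R} W_2$, whence $(W_1, W_2) \in \rho_R$.

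Combining the two inclusions gives $\rho_R = \rho$, so $F_S/\rho_R \cong F_S/\rho \cong S$, which is exactly the assertion that $S$ is presented by $\langle \mathcal{A} \mid R \rangle$. There is no serious obstacle here, which is why the statement is flagged as elementary; the only point demanding any care is that each step of the reduction relies on the congruence property of $\rho_R$ (closure under left and right multiplication) to legitimately rewrite a subword inside a longer word, and this is precisely what licenses the iterated collapse of an arbitrary product down to a single generator.
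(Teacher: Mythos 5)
Your proof is correct: the paper explicitly omits the proof of this lemma as elementary, and the argument you supply --- showing the congruence generated by $R$ equals $\ker(\varepsilon)$ by collapsing any word $x_{g_1}\cdots x_{g_n}$ down to the single generator $x_{g_1\cdots g_n}$ via the multiplication-table relations --- is exactly the standard argument the author is alluding to. The one point worth noting explicitly, which you handle correctly, is that the collapse is only possible because $\mathcal{A}=\{x_g\}_{g\in S}$ contains a generator for \emph{every} element of $S$ and $R$ contains a relation for \emph{every} product $h=h'h''$.
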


Consider the following commutative diagram:
\[ \xymatrix{ F_S \ar@{->}[rr]^{\displaystyle \varepsilon(x_g)=g} \ar@{->}[dd]_{\displaystyle \beta(x_g)=X_g}      && S \ar@{->}[dd]^{\displaystyle \gamma(g)=X_g\xi}    \\ \\ F(b_1,b_2) \ar@{->}[rr]_{\displaystyle \overline{\varepsilon}} && H=F(b_1,b_2)/\xi } \]

\noindent where $\xi$ is the unique smallest congruence relation on the free semigroup $F(b_1,b_2)$ containing the set $\beta R = \{(X_h,X_{h'}X_{h''}) : h=h'h'' \textrm{ in } S\}$ of defining relations of $H$, and $ \overline{\varepsilon}$ is the natural epimorphism. Observe that $\gamma$ may be well-defined by the formula $\gamma \varepsilon = \overline{\varepsilon}\beta$; i.e. $\gamma := \overline{\varepsilon}\beta \varepsilon^{-1}$. This definition is independent of the choice of $\varepsilon^{-1}(g)$ for $g \in S$; in particular, we may select representative $\varepsilon^{-1}(g)=x_g.$  This is because if we have two representatives, $\varepsilon^{-1}(g)= x_g = x_{g_1} x_{g_2} \cdots x_{g_n}$ then $\varepsilon(x_g)=\varepsilon(x_{g_1}x_{g_2} \cdots x_{g_n})=\varepsilon(x_{g_1}) \cdots \varepsilon(x_{g_n})$ so $g = g_1 \cdots g_n$ in $S$. One computes that $\overline{\varepsilon}\beta (x_{g_1}x_{g_2} \cdots x_{g_n}) = \overline{\varepsilon} (X_{g_1}X_{g_2} \cdots X_{g_n}) = X_{g_1}X_{g_2}\cdots X_{g_n} \xi$ and $\overline{\varepsilon}\beta (x_{g}) = \overline{\varepsilon}X_{g}=X_g \xi$. By definition, $X_{g_1}X_{g_2} \cdots X_{g_n} \xi = X_g \xi$ if $(X_{g_1}X_{g_2}\cdots X_{g_n},X_g) \in \xi$. By induction, we may assume that \\ $(X_{g_1} \cdots X_{g_{n-1}}, X_{g_1 \cdots g_{n-1}}) \in \xi$. Then because $\xi$ is left compatible, we have that $(X_{g_1} \cdots X_{g_n}, X_{g_1 \cdots g_{n-1}}X_{g_n}) \in \xi$. By definition of $\beta R$ we also have that $(X_{g_1 \cdots g_{n-1}}X_{g_n}, X_g) \in \xi$. Therefore, $(X_{g_1} \cdots X_{g_n}, X_g) \in \xi$ as required.

\begin{lemma}\label{di}
The map $\beta$ is injective.
\end{lemma}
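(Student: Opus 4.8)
The plan is to deduce injectivity of $\beta$ from the unique-factorization statement already packaged in Lemma \ref{lo}, combined with the injectivity of $g \mapsto X_g$ guaranteed by Lemma \ref{li}. Concretely, I would take two words $x_{g_1} x_{g_2} \cdots x_{g_n}$ and $x_{h_1} x_{h_2} \cdots x_{h_m}$ in $F_S$ with the same image under $\beta$, so that
$$X_{g_1} X_{g_2} \cdots X_{g_n} \equiv X_{h_1} X_{h_2} \cdots X_{h_m}$$
is a letter-for-letter equality of words in $F(b_1,b_2)$. Since $F_S$ is a semigroup without identity, both $n$ and $m$ are at least $1$, and every factor $X_{g_i}$ and $X_{h_j}$ lies in the overlap set $\mathcal{M}$ of Lemma \ref{kul}.

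The main step is to apply Lemma \ref{lo} to this equality, reading the left-hand side as $X_1 \cdots X_t$ with $t = n$ and $X_i = X_{g_i}$, and the right-hand side as $S Y_1 \cdots Y_m T$ with $S$ and $T$ both the empty word and $Y_j = X_{h_j}$. The lemma produces an index $i$ with $S \equiv X_{g_1} \cdots X_{g_{i-1}}$, $T \equiv X_{g_{i+m}} \cdots X_{g_n}$, and $X_{h_j} \equiv X_{g_{-1+i+j}}$ for $j = 1, \dots, m$. Because $S$ is empty we are forced to have $i = 1$, and because the matching indices $-1+i+j$ must stay in range while $T$ is also empty, the count of factors on each side must coincide, forcing $m = n$. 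With $i = 1$ the lemma then reads $X_{h_j} \equiv X_{g_j}$ for every $j = 1, \dots, n$.

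Finally, the map $g \mapsto X_g$ is an injection by Lemma \ref{li}, so each equality $X_{h_j} \equiv X_{g_j}$ yields $h_j = g_j$. Hence $x_{g_1} \cdots x_{g_n}$ and $x_{h_1} \cdots x_{h_m}$ are literally the same word in $F_S$, which is exactly what injectivity of $\beta$ demands. I do not expect a genuine obstacle here: the overlap property of $\mathcal{M}$ has already done the real work of ruling out any nontrivial refactorization, so this lemma is essentially a corollary of Lemma \ref{lo} in the special case of an empty prefix and suffix. The only point requiring care is correctly aligning the hypotheses of Lemma \ref{lo}, in particular checking that the empty-word choices of $S$ and $T$ pin down $i = 1$ and $m = n$ rather than merely asserting that the two factorizations share a common middle segment.
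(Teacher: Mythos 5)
Your proposal is correct and follows essentially the same route as the paper: reduce to a letter-for-letter equality in $F(b_1,b_2)$, apply Lemma \ref{lo} with empty prefix and suffix to force $n=m$ and $X_{g_j} \equiv X_{h_j}$, and finish with the injectivity of $g \mapsto X_g$ from Lemma \ref{li}. Your extra care in checking that the empty choices of $S$ and $T$ pin down $i=1$ and $m=n$ is a detail the paper leaves implicit, but the argument is the same.
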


\begin{proof}
Suppose $x_{g_1} \cdots x_{g_n}, x_{h_1}, \cdots x_{h_m} \in F_S$ and \\ $\beta(x_{g_1} \cdots x_{g_n})=\beta(x_{h_1}, \cdots x_{h_m})$. Then $X_{g_1} \cdots X_{g_n} = X_{h_1} \cdots X_{h_m}$. Because $X_{g_1} \cdots X_{g_n}$ and $X_{h_1} \cdots X_{h_m}$ are words in the free group $F(b_1,b_2)$ the equality must in fact be letter-for-letter. Therefore by Lemma \ref{lo}, we have that $n=m$ and $X_{g_i} \equiv X_{h_i}$ for $i=1, \dots, n$. By Lemma \ref{li} the map $S \rightarrow F(b_1,b_2) : g \mapsto X_g$ is injective, hence $g_1 = h_1, \dots, g_n=h_n$ so $x_{g_1} \cdots x_{g_n} = x_{h_1}, \cdots x_{h_m}$. 
 
\end{proof}

\begin{lemma}\label{lgi}
The map $\gamma$ is injective.
\end{lemma}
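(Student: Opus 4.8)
The plan is to exploit the rigidity of the overlap property to attach to each word in the $\xi$-class of $X_g$ a well-defined element of $S$ that is preserved by every elementary $\xi$-move. Concretely, I would show that the entire $\xi$-class of any product of $\mathcal{X}$-words consists again of products of $\mathcal{X}$-words, and that on such products one can read off a canonical value in $S$ which is $\xi$-invariant; applied to $X_g$ and $X_h$ this forces $g=h$.

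First I would define, for any word $W \in F(b_1,b_2)$ admitting a factorization $W \equiv X_{g_1} X_{g_2} \cdots X_{g_n}$ with each $X_{g_j} \in \mathcal{X}$, the value $\phi(W) := g_1 g_2 \cdots g_n \in S$, the product being taken in $S$. This is well-defined: if $W$ also equals $X_{h_1}\cdots X_{h_m}$, then Lemma \ref{lo} (with empty prefix and suffix) forces $m=n$ and $X_{g_j} \equiv X_{h_j}$, whence $g_j=h_j$ by injectivity of $g \mapsto X_g$ from Lemma \ref{li}. This is exactly the uniqueness of factorization already exploited in Lemma \ref{di}.

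The key step is to show that a single elementary $\xi$-move carries a product of $\mathcal{X}$-words to another product of $\mathcal{X}$-words while leaving $\phi$ unchanged. Recall that $\xi$ is the transitive closure of the one-step rewriting $A L B \leftrightarrow A L' B$, where $(L,L') \in \beta R$ and $A,B$ are arbitrary (possibly empty) words; each side of a defining relation is a product of one or two elements of $\mathcal{X} \subset \mathcal{M}$, namely $L \equiv X_p$ or $L \equiv X_{p'}X_{p''}$ with $p=p'p''$ in $S$. Writing the rewritten word as $X_{g_1}\cdots X_{g_n}$ and applying Lemma \ref{lo} with the factors of $L$ playing the role of the $Y_j$, the occurrence of $L$ must align exactly with consecutive factors $X_{g_i},\ldots,X_{g_{i+m-1}}$, so that $A \equiv X_{g_1}\cdots X_{g_{i-1}}$ and $B \equiv X_{g_{i+m}}\cdots X_{g_n}$ are themselves products of $\mathcal{X}$-words. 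Replacing $L$ by $L'$ therefore yields another product of $\mathcal{X}$-words; moreover the matched factors satisfy $p=g_i$ (respectively $p'=g_i$, $p''=g_{i+1}$) by injectivity, so the defining relation $p=p'p''$ in $S$ shows that the $S$-product is unchanged, i.e. $\phi$ is preserved.

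Finally I would conclude by induction along any $\xi$-chain. Since $X_g$ is a one-factor product of $\mathcal{X}$-words with $\phi(X_g)=g$, every word in its $\xi$-class is again a product of $\mathcal{X}$-words with $\phi$-value $g$. Hence if $\gamma(g)=\gamma(h)$, that is $(X_g,X_h)\in\xi$, then $g=\phi(X_g)=\phi(X_h)=h$, so $\gamma$ is injective. The main obstacle is the middle step: one must ensure that no elementary move can produce a word lying outside the span of $\mathcal{X}$-factors, nor disturb the factor boundaries. It is precisely the overlap property, via Lemma \ref{lo}, that excludes any straddling match or partial overlap of a relation with the existing factors, guaranteeing that every rewrite respects the $\mathcal{X}$-decomposition and thus that $\phi$ descends to a genuine invariant of $\xi$-classes.
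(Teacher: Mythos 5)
Your proposal is correct and follows essentially the same route as the paper's proof: tracking the product of subscripts of the $\mathcal{X}$-factors as an invariant of each elementary $\xi$-move along a chain from $X_g$ to $X_{g'}$. You make explicit two points the paper leaves implicit --- that the factorization into $\mathcal{X}$-words is unique (via Lemma \ref{lo} and the injectivity of $g \mapsto X_g$), so the invariant $\phi$ is well-defined, and that any applied relation must align with factor boundaries --- which is a welcome sharpening but not a different argument.
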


\begin{proof}
Suppose that $g,g' \in S$ and $\gamma (g)=\gamma (g')$. We will show that $g=g'$. Since $\gamma=\overline{\varepsilon}\beta \varepsilon^{-1}$, we have that $\overline{\varepsilon}\beta x_g = \overline{\varepsilon}\beta x_{g'}$ which implies that $\overline{\varepsilon}X_g =\overline{\varepsilon}X_{g'}$. Thus by definition of $\overline{\varepsilon}$ we have that $(X_g,X_{g'}) \in \xi$ which means that there is a finite chain $$X_g=X_{k_0} \rightarrow X_{k_1} \rightarrow X_{k_2} \cdots \rightarrow X_{k_m}=X_{g'}$$ where each $\rightarrow$ is obtained by applying a defining relation. Every $X_{k_i}$ is a product of elements of the form $X_h$ where $h \in S$. Each time we apply a defining relation, we replace one $X_h$ with $X_{h'}X_{h''}$ or vice-versa, where $h=h'h''$ in $S$. Therefore, for each $X_{k_i}$, the product of subscripts equals the same element of $S$; in particular, $g=g'$ as required.
\end{proof}

Let $H_S$ be the free subsemigroup of $F(b_1,b_2)$ with free generating set $\{X_g\}_{g \in S}$. We know that $H_S$ is free by Lemma \ref{di}, because $H_S = \textrm{im}  \beta \cong F_S / \ker \beta \cong F_S$. As $ \overline{\varepsilon}$ is an epimorphism, we can consider the system $\mathcal{B}=\{b_1, b_2\}$ to be a generating set for the semigroup $H$ which contains the isomorphic copy $\gamma(S)$ of $S$, by Lemma \ref{lgi}.

By an $H_S$-word we mean any word of the form $W(X_g, \dots, X_h)$. Any $H_S$-word can be rewritten as a word in the letters $b_1$ and $b_2$.

The following is an important ingredient in the proof of Theorem \ref{t1} Part $(1)$.

\begin{lemma}\label{lw}
For any $H_S$-word $U$, there is an $H_S$-word $V$ such that $\overline{\varepsilon}(V)=\overline{\varepsilon}(U)$ and $||V|| \leq ||W||$ for any word $W$ with $\overline{\varepsilon}(W)=\overline{\varepsilon}(U)$.
\end{lemma}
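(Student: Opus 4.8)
The plan is to establish the stronger structural fact that the entire $\xi$-class of an $H_S$-word consists of $H_S$-words; the lemma then follows immediately by taking $V$ to be any shortest word representing $\overline{\varepsilon}(U)$. First I would record the elementary observation that both sides of every defining relation in $\beta R$, namely $X_h$ and $X_{h'}X_{h''}$, are themselves $H_S$-words. Since $\xi$ is the congruence generated by $\beta R$, any word $W$ with $(W,U) \in \xi$ is joined to $U$ by a finite chain of elementary rewriting moves, each replacing an occurrence of $X_h$ by $X_{h'}X_{h''}$ or vice versa. It therefore suffices to prove that a single such move carries an $H_S$-word to an $H_S$-word, and then to conclude by induction on the length of the chain.

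The core step, which I expect to be the main point, is to show that the occurrence of the rewritten factor is forced to align with the (unique) decomposition of the $H_S$-word into elements of $\mathcal{M}$. Write the current $H_S$-word as $P \equiv X_{g_1} \cdots X_{g_n}$. If the move inserts $X_{h'}X_{h''}$ in place of $X_h$, then $P \equiv S\, X_h\, T$ for some a priori arbitrary words $S,T$; applying Lemma \ref{lo} with $m=1$ yields $X_h \equiv X_{g_i}$ together with $S \equiv X_{g_1}\cdots X_{g_{i-1}}$ and $T \equiv X_{g_{i+1}} \cdots X_{g_n}$, so the result $S\, X_{h'} X_{h''}\, T$ is again an $H_S$-word. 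Symmetrically, if the move contracts $X_{h'}X_{h''}$ to $X_h$, then $P \equiv S\, X_{h'} X_{h''}\, T$, and Lemma \ref{lo} with $m=2$ gives $X_{h'} \equiv X_{g_i}$ and $X_{h''} \equiv X_{g_{i+1}}$ with $S,T$ the surrounding $H_S$-subwords, so the contracted word is again an $H_S$-word.

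This is exactly where the overlap property and its unique-factorization consequence packaged in Lemma \ref{lo} are indispensable: without them a relation could in principle be applied across a ``seam'' of the factorization and destroy the $H_S$-structure, whereas Lemma \ref{lo} guarantees that any sub-product of $\mathcal{M}$-letters appearing inside $P$ must coincide with a block of consecutive factors $X_{g_i}, X_{g_{i+1}}, \dots$. Note also that this uses only $X_h, X_{h'}, X_{h''} \in \mathcal{X} \subseteq \mathcal{M}$, so the hypotheses of Lemma \ref{lo} are met in both cases.

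With the claim in hand, every word $W$ satisfying $\overline{\varepsilon}(W) = \overline{\varepsilon}(U)$ is an $H_S$-word. Since there are only finitely many words of any bounded length over $\{b_1,b_2\}$, a shortest representative $V$ of $\overline{\varepsilon}(U)$ exists; by the claim it is an $H_S$-word, and by minimality it satisfies $||V|| \leq ||W||$ for every competing word $W$ with $\overline{\varepsilon}(W) = \overline{\varepsilon}(U)$, which is precisely the assertion of the lemma.
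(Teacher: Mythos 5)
Your proposal is correct and follows essentially the same route as the paper: show by induction along the chain of elementary rewritings that every word in the $\xi$-class of an $H_S$-word is again an $H_S$-word, using Lemma \ref{lo} to force each applied relation to align with the factorization, and then take $V$ to be a shortest representative. If anything, you are slightly more careful than the paper, which only writes out the expansion direction $X_h \to X_{h'}X_{h''}$ explicitly, whereas you also treat the contraction $X_{h'}X_{h''} \to X_h$ via Lemma \ref{lo} with $m=2$.
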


\begin{proof}
It suffices to show that if a word $W$ satisfies $\overline{\varepsilon}(W)=\overline{\varepsilon}(U)$ then $W$ must be an $H_S$-word. 
Because $W=U$ in $H$ there is a finite chain $$U=U_0 \rightarrow U_1 \rightarrow \cdots \rightarrow U_m=W$$ where each $\rightarrow$ is obtained by applying a defining relation in $H$. Suppose by induction that at the $n^{th}$ step we have $U_{n} \rightarrow U_{n+1}$ where the $H_S$-word $U_n \equiv X_{l_1} \cdots X_{l_t}$ for $l_1, \dots, l_t \in S$. Therefore we have that $X_{l_1} \cdots X_{l_t} \equiv T'X_hT = T'X_{h'}X_{h''}T \equiv U_{n+1}$ for some words $T,T'$ where the defining relation applied was $X_h=X_{h'}X_{h''}$ for $h=h'h''$ in $S$. By Lemma \ref{lo}, both $T$ and $T'$ are $H_S$-words. Thus so is $U_{n+1}$, and by induction, $W$. 
\end{proof}

\begin{proof} of Theorem \ref{t1} Part $1$: \\
By Lemma \ref{lgi} we may identify $S$ with its image $\gamma(S) \subset H$. The equalities $$g = \gamma (g) = \overline{\varepsilon} \beta \varepsilon^{-1}(g) = \overline{\varepsilon} \beta(x_g) = \overline{\varepsilon}(X_g)$$ and the inequalities (\ref{six}) yield 
\begin{equation}\label{e3}
|g|_{\mathcal{B}} \leq dl(g)
\end{equation}
for $d>0$ and for any $g \in S \subset H$.
To obtain the opposite estimate, we consider an element $g \in S$ and apply Lemma \ref{lw} to the $H_S$-word $U \equiv X_g$. For a word $W$ of minimum length representing the element $X_g$, and for the $H_S$-word $V$ from Lemma \ref{lw}, we have 
\begin{equation}\label{e4}
|g|_{\mathcal{B}}=||W|| \geq ||V||.
\end{equation}
By definition of $H_S$ there exists a unique decomposition of the $H_S$-word $V$ as a product $V \equiv X_{g_1} X_{g_2} \cdots X_{g_s}$ for some $g_j \in S$. 
Because $V=W$ in $H$ we have that $(X_{g_1} \cdots X_{g_s},X_g) \in \xi$ which implies by previous arguments that $g = g_1 \cdots g_s$ in the subsemigroup $S$ of $H$. Taking into account the inequalities (\ref{six}) we conclude that $||X_{g_j}|| \geq l(g_j)$. Hence, by the condition $(D_1)$ we have that $$||V|| =\displaystyle\sum_{j=1}^s||X_{g_j}|| \geq \displaystyle\sum_{j=1}^s l(g_j) \geq l(g).$$ Therefore, $|g|_{\mathcal{B}} \geq l(g)$, by (\ref{e4}). This, together with inequality (\ref{e3}), completes the proof.
\end{proof}

The following Lemma will essentially prove Theorem \ref{t1} Part $2$. We fix notation as in the Theorem: $S$ is a finitely generated semigroup, and $l:S \rightarrow \mathbb{N}$ satisfies the $(D)$ condition.

\begin{lemma}\label{ewo}
There is an exponential set of words $\mathcal{N}$ over a finite alphabet $\mathcal{C}$ satisfying the overlap property such that there is an injection $S \rightarrow \mathcal{N} : g \mapsto X_g$ satisfying
\begin{equation}\label{e5}
l(g)=||X_g||.
\end{equation}
\end{lemma}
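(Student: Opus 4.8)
The plan is to build an exponential set $\mathcal{N}$ with the overlap property admitting an injection $g \mapsto X_g$ that realizes the length \emph{exactly}, i.e. $\|X_g\| = l(g)$. Compared with Lemma \ref{li}, where we only needed $l(g) \leq \|X_g\| < d\,l(g)$ and could reuse an off-the-shelf exponential overlap set in two letters, here the rigidity $\|X_g\| = l(g)$ forces us to have, for each value $r \in \mathbb{N}$, enough words of length exactly (or at most) $r$ in $\mathcal{N}$ to accommodate all the elements $g$ with $l(g) = r$. The $(D2)$ condition is exactly what guarantees this is possible: it bounds $\operatorname{card}\{g : l(g) \leq r\}$ by $a^r$, and an exponential set supplies at least $c^i$ words of length $\leq i$. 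The main work is to arrange that the \emph{exact} counts match up, which I expect to handle by enlarging the alphabet.

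\textbf{Step 1: Pick an alphabet large enough to beat $a$.} First I would fix the constant $a$ from $(D2)$, so $\operatorname{card}\{g \in S : l(g) \leq r\} \leq a^r$ for all $r$. I would then choose a finite alphabet $\mathcal{C}$ of size depending on $a$ and build a candidate set $\mathcal{N}$ in $\mathcal{C}$ of the same delimited shape as $\mathcal{M}$ in Lemma \ref{kul} (words framed by $c_1^3 \cdots c_2^3$ with a middle that avoids the cubes), so that the overlap property of Definition \ref{d1} is inherited verbatim from the proof of Lemma \ref{kul}; that proof used only the framing trick and no arithmetic about the size of the alphabet, so it transfers. With $|\mathcal{C}|$ large, the number of admissible middle words of a given length grows like a large base to the appropriate power, which I would arrange to exceed $a$, guaranteeing that for every $r$ beyond some threshold there are at least $a^r$ words of $\mathcal{N}$ of length exactly $r$.

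\textbf{Step 2: Assign words level by level to hit the length exactly.} With the counting inequality $\operatorname{card}\{X \in \mathcal{N} : \|X\| = r\} \geq a^r \geq \operatorname{card}\{g : l(g) = r\}$ secured for all sufficiently large $r$ (and handling the finitely many small values of $r$ separately by padding the alphabet or absorbing them into the exponential growth constant $N$), I would define the injection $g \mapsto X_g$ by processing $S$ in order of increasing $l$-value: for each $r$, inject the finite set $\{g : l(g) = r\}$ into the set of words of $\mathcal{N}$ of length exactly $r$ that have not yet been used. Injectivity across the whole of $S$ follows because words of distinct lengths are automatically distinct, and within a single level we use an injection into the available words of that length. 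By construction $\|X_g\| = l(g)$, which is (\ref{e5}).

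\textbf{The main obstacle.} The delicate point is the boundary between the asymptotic regime, where exponential growth gives room to spare, and the finitely many initial lengths $r < N$, where the raw count of $\mathcal{N}$-words of length exactly $r$ might fall short of $\operatorname{card}\{g : l(g) = r\}$ even though the total $a^r$ bound is satisfied. The fix I expect to use is to choose $|\mathcal{C}|$ strictly larger than $a$, so that the per-length word count $c^r$ dominates $a^r$ with a growing margin, and to enlarge the framing/padding so that even the small lengths carry enough words; alternatively one can reserve extra letters used purely as padding to inflate the supply at short lengths without disturbing the overlap property. Verifying that the overlap property genuinely survives this enlargement — in particular that no padded word becomes a proper subword of another or creates a forbidden prefix/suffix overlap — is the one place where I would reproduce, rather than merely cite, the framing argument from Lemma \ref{kul}.
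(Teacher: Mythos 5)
Your proposal is correct and takes essentially the same route as the paper: fix the constant $a$ from $(D2)$, take an alphabet $\mathcal{C}$ of size depending on $a$, build a delimited set $\mathcal{N}$ with the overlap property having at least as many words of each exact length $i$ as there are $g \in S$ with $l(g)=i$, and assign words to elements level by level. The paper's only (cosmetic) difference is that it frames words as $c_1 v(c_2,\dots,c_{a+1})c_{a+2}$ with two reserved letters occurring only at the boundary, rather than reusing the cube-framing of Lemma \ref{kul}, which makes the overlap property and the exact-length count immediate; your concern about the finitely many short lengths is a genuine subtlety that the paper passes over silently.
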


\begin{proof}
Let $a$ be the integer arising from condition $(D2)$ for the given function $l$. Let $\mathcal{C}=\{c_1, \dots, c_{a+2}\}$. It suffices to produce a set of words $\mathcal{N}$ satisfying the overlap property and subject to $$\mathrm{card}\{y \in \mathcal{N} : ||y|| = i \} \geq a^i.$$ For if this is satisfied, then for every $g \in S$ we may find a distinct word of length $l(g)$ from our exponential set satisfying the overlap property. 
The same argument as that given in the proof of Lemma \ref{kul} shows that the set $$\mathcal{N} =\{c_1v(c_2, \dots, c_{a+1})c_{a+2}\}$$ where $v$ is an arbitrary word in $c_2, \dots, c_{a+1}$ does satisfy the required properties.
\end{proof}

\begin{remark}
Observe that Theorem \ref{t1} Part $2$ follows from Lemma \ref{ewo} by replacing the set $\mathcal{M}$ by $\mathcal{N}$ and the inequalities (\ref{six}) by equality (\ref{e5}) everywhere in the proof of Theorem \ref{t1} Part $1$.
\end{remark}

\section{Embedding to Finitely Presented Semigroups}\label{fourth}
In this section we will prove Theorems \ref{two} and \ref{d4}.

We begin with an undistorted analogue of Murskii's embedding theorem.

\begin{theorem}\label{three}
Let $H$ be a semigroup with a finite generating set $\mathcal{B}$ and a recursively enumerable set of (defining) relations. Then there exists an isomorphic embedding of $H$ in some finitely presented semigroup $R$ with generating set $\mathcal{T}$ without distortion.

\end{theorem}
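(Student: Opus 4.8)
The plan is to follow the structure of Murskii's classical embedding theorem while tracking word lengths at every step, so that the resulting isomorphic embedding is in addition undistorted. One half of the required equivalence is automatic: applying the analogue of inequality (\ref{one}) to the inclusion $H \hookrightarrow R$ gives $|h|_{\mathcal{T}} \leq c\,|h|_{\mathcal{B}}$ for all $h \in H$, with $c = \max\{|b_1|_{\mathcal{T}}, \dots, |b_k|_{\mathcal{T}}\}$ independent of $h$. Hence the entire content of the theorem is the reverse estimate $|h|_{\mathcal{B}} \leq C\,|h|_{\mathcal{T}}$ for a constant $C$ not depending on $h$; together with the first inequality this is exactly the relation $\approx$ of Definition \ref{equiv} abbreviated by the phrase ``without distortion.''

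First I would fix a Turing machine $M$ enumerating the recursively enumerable set of defining relations of $H$, and then replace the direct simulation of $M$ by a space-bounded one, namely an S-machine $\mathcal{S}$ in the sense of \cite{os}. The reason for this replacement is precisely length control: an S-machine can certify an input of length $n$ while keeping each of its configuration words of length $O(n)$, and it is this linear space bound that will ultimately yield a linear distortion bound. I would then construct $R$ as a finitely presented semigroup on a generating set $\mathcal{T}$ consisting of the letters $b_1, \dots, b_k$ of $H$, the finitely many tape and state letters of $\mathcal{S}$, and a bounded family of connecting generators; its defining relations are the finitely many transition rules of $\mathcal{S}$ together with hub relations that allow a relation $u = v$ of $H$ to be installed only after $\mathcal{S}$ has run a complete accepting computation certifying the pair $(u,v)$.

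The next step is to check that $H$ embeds, i.e. that two $\mathcal{B}$-words are equal in $R$ exactly when they are equal in $H$. The nontrivial inclusion is that the machine sectors introduce no spurious collapses, which I would prove by analysing minimal derivations (finite chains $W_0 \to W_1 \to \cdots$ of relation applications, in the style already used in Lemma \ref{lw} and Lemma \ref{lgi}) and showing that any such chain joining two $\mathcal{B}$-words can be rerouted to avoid the machine sectors entirely, so that it uses only the relations of $H$ and the two words are already equal in $H$. The overlap combinatorics of Lemma \ref{lo} and the absence of inverses should make this normalisation cleaner than its group-theoretic counterpart.

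The main obstacle, and the heart of the argument, is the reverse distortion estimate. Given $h \in H$ and a $\mathcal{T}$-word $W$ of minimal length in $R$ with $\overline{\varepsilon}(W) = h$, I would produce a derivation $W = W_0 \to W_1 \to \cdots \to W_N$ ending in a $\mathcal{B}$-word $W_N$ that represents $h$, and arrange that the machine sectors appearing along it perform only their intended linear-space computations. The space bound for $\mathcal{S}$ then forces $\|W_N\| = O(\|W\|)$, whence $|h|_{\mathcal{B}} \leq \|W_N\| = O(\|W\|) = O(|h|_{\mathcal{T}})$, which is the desired estimate. The delicate point throughout is to keep all bookkeeping linear: a single step replacing a configuration by one of superlinearly larger length would break undistortion, and this is exactly why a space-bounded S-machine, rather than an arbitrary simulation of $M$, must be used from the very beginning.
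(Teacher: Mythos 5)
Your opening paragraph is right and agrees with the paper: the inequality $|h|_{\mathcal{T}} \leq c|h|_{\mathcal{B}}$ is automatic, so the entire content of Theorem \ref{three} is the reverse estimate. From there, however, your proposal becomes a program rather than a proof, and the two steps you defer --- injectivity of the embedding and the bound $|h|_{\mathcal{B}} \leq C|h|_{\mathcal{T}}$ --- are precisely the hard ones. An ``S-machine in the sense of \cite{os}'' is a group-theoretic object: its rules, tape and state letters, and hub relations exploit inverses throughout, and there is no off-the-shelf semigroup analogue with the linear-space property you invoke. Asserting that the machine sectors ``introduce no spurious collapses,'' or that derivations between $\mathcal{B}$-words can be ``rerouted to avoid the machine sectors,'' is exactly the content that would have to be proved; in the group setting this normalisation occupies the bulk of \cite{os}. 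Note also that the paper explicitly flags the machine-complexity route: Birget's undistorted semigroup analogue of Murskii's theorem in \cite{birget} --- which is essentially what you are sketching --- requires additional hypotheses on the time complexity of the word problem, and the paper states it is unclear whether a simple proof can be extracted from it.

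The paper's actual argument constructs nothing new. It takes Murskii's original embedding \cite{murskii} verbatim and observes that it is already undistorted, by appealing to Murskii's Lemma 3.3: any $\mathcal{T}$-word $W$ equal in $R$ to a $\mathcal{B}$-word $P$ decomposes as $W \equiv P_0U_1P_1\cdots U_lP_l$, where each $P_i$ is a word over $\mathcal{B}$, each $U_i$ contains after deletions a subword $\tilde{R_i}$ with $||\tilde{R_i}|| = ||R_i|| \leq ||U_i||$ for some $\mathcal{B}$-word $R_i$, and $P = P_0R_1P_1\cdots R_lP_l$ holds in $H$. Summing lengths gives $|P|_{\mathcal{B}} \leq ||W||$ immediately, i.e.\ the reverse estimate with constant $C=1$. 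To salvage your approach you would either have to develop a semigroup S-machine theory from scratch (with all the attendant injectivity and length-control lemmas) or, as the paper does, locate the length-control statement already hidden in Murskii's construction.
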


Observe that Theorem \ref{two} follows immediately from Theorem \ref{t1}, Part $1$, Theorem \ref{three} and the assumption that $S$ has recursively enumerable set of defining relations.

Although an undistorted semigroup analog of Murskii's embedding appears in \cite{birget}, that Theorem makes additional assumptions regarding time complexity of the word problem in $H$. It is not clear to the author whether a simple proof of Theorem \ref{three} may be extracted from \cite{birget}.

To prove Theorem \ref{three} we will instead use such an embedding which was invented in \cite{murskii}, and show that it is undistorted.

\begin{proof} of Theorem \ref{three}. 
Let $P \in H$ and $W$ is a word representing the image of $P$ in $R$ under the embedding. We have by \cite{murskii} Lemma 3.3 that if a word $P$ in the alphabet $\mathcal{B}$ is equal in $R$ to a word $W$ in the alphabet $\mathcal{T}$ then it is possible to represent $W$ in the form $$W \equiv P_0U_1P_1U_2 \cdots U_lP_l$$ such that
\begin{enumerate}
\item All $P_i$'s are words in the alphabet $\mathcal{B}$;
\item One can delete some subwords from every $U_i$ and obtain a word $U'_i$, which by Lemma 3.1 in \cite{murskii} has subword $\tilde{R_i}$, where $R_i$ are words in the alphabet $\mathcal{B}$ and $\tilde{R_i}$ is are not words in the alphabet $\mathcal{B}$, but $||\tilde{R_i}||=||R_i||$ for all $i$. 

\item The word $P_0R_1P_1 \cdots R_lP_l$ is equal to $P$ in $H$.
\end{enumerate}

This implies that $$|P|_\mathcal{B} \leq |P_0R_1P_1 \cdots R_lP_l|_\mathcal{B} \leq |P_0|_\mathcal{B}+|R_1|_\mathcal{B}+ \cdots + |P_l|_\mathcal{B}$$ $$ \leq ||P_0||+||R_1||+ \cdots + ||P_l||=||P_0||+||\tilde{R_1}||+ \cdots + ||P_l|| $$ $$\leq ||P_0||+||U_1'||+ \cdots + ||P_l|| \leq ||P_0||+||U_1||+ \cdots + ||P_l|| = ||W||.$$

Indeed, we have that $||\tilde{R_i}|| \leq ||U_i'||$ because $\tilde{R_i}$ is a subword of $U_i'$ for all $i$. Similarly, because $U_i'$ is obtained from $U_i$ by deleting subwords, we have $|U_i| \geq |U_i'|$ for all $i$. Since $W$ is any word equal to $P$ in $R$, the above inequalities hold in particular when $||W||=|P|_{\mathcal{T}}$ so we have that $|P|_\mathcal{B} \leq |P|_{\mathcal{T}}$, which shows that the embedding is undistorted.
 \end{proof}

We proceed with consideration of Theorem \ref{d4}, in particular towards establishing notation to be used in the proof.

Let $S$ be a finitely generated semigroup with generating set $\mathcal{A}=\{a_1, \dots, a_m\}$. For any $k>0$, let $F_k$ denote the free semigroup of rank $k$. 

Suppose that a function $l:S \rightarrow \mathbb{N}$ satisfies conditions $(D1)-(D3)$. Let $\pi: F_m \rightarrow S$ be the natural projection. By hypothesis, there exists a recursively enumerable set $T$ satisfying Properties $(1), (2),$ and $(3)$ of Condition $(D3)$. Let $U$ be the natural projection of $T$ onto $F_n$. Let $\phi: U \rightarrow F_m$ such that $v=\phi(u)$ if $(v,u) \in T$ and $(v,u)$ is the first pair in the enumeration of $T$ whose second component is $u$. 

By Lemma \ref{kul} there exists an exponential set of words $\mathcal{M}$ over the alphabet $\{x_1,x_2\}$ satisfying the overlap condition of Definition \ref{d1}. For the word length function $F_n \rightarrow \mathbb{N}$, there exists by Lemma \ref{li} a constant $d$ and an injection $\psi: F_n \rightarrow \mathcal{M} \subset F_2=F(x_1,x_2) : u \rightarrow X_u$ satisfying 
\begin{equation}\label{e6}
||u|| \leq ||X_u|| <d||u||.
\end{equation}
We may chose the function $\psi$ to be recursive. Begin by putting an order (e.g. ShortLex) on $U$. Then for every $u$ starting with the shortest we select the smallest word $X_u$ satisfying equation (\ref{e6}) and such that $X_u \ne X_{u'}$ if $u'<u$. 

Let $F(V)$ be the free semigroup with basis $V=\{x_v\}_{v \in F_m}$. Consider the natural epimorphism defined on generators by $\zeta: F(V) \rightarrow S: \zeta(x_v)=\pi(v)$. Define the free semigroup $F(Y)$ with basis $Y=\{y_u\}_{u \in U}$. Let $\eta: F(Y) \rightarrow F(V)$ be defined by $\eta(y_u)=x_{\phi(u)}.$ Then the product $\varepsilon = \zeta \eta$ is an epimorphism because by Parts $(1)$ and $(2)$ of Condition $(D3)$, for any $v \in F_m$ there is $(v',u) \in T$ such that $\phi(u)=v'$ and $\pi(v')=\pi(v).$ Therefore, there is a presentation $S=\langle Y | \mathcal{R} \rangle$ defined by the isomorphism $S \cong F(Y)/\ker(\varepsilon).$

Define a homomorphism $\beta: F(Y) \rightarrow F_2: \beta(y_u)=\psi(u)=X_u$. Let $\xi$ be the unique smallest congruence relation on the free semigroup $F_2$ containing the set $\beta(\mathcal{R})=\{(\beta(a),\beta(b)):(a,b) \in \mathcal{R}\}$. Let $\overline{\varepsilon}$ the natural epimorphism of $F_2$ onto $H=F_2/\xi$. Let $\gamma: S \rightarrow H$ be defined by $\gamma=\overline{\varepsilon}\beta\varepsilon^{-1}$. There is also a map $F(V) \rightarrow F_m: x_v \mapsto v.$
Consider the commutative diagram defined by all these maps:

\[ \xymatrix{
			&	   F(V) \ar[dr]^{\zeta} \ar[r] & 		      F_m \ar[d]^{\pi} \\
& F(Y)  \ar[d]^{\beta} \ar[r]^{\varepsilon} \ar[u]^{\eta} & S \ar[d]^{\gamma} \\
			  & F_2 \ar[r]^{\overline{\varepsilon}}      & H} \]

\begin{lemma}\label{ert}
The map $\beta$ is injective.
\end{lemma}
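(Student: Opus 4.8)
The plan is to follow verbatim the strategy already used to establish Lemma \ref{di}, since the present statement is formally identical once the injection $g \mapsto X_g$ on $S$ is replaced by the injection $\psi: u \mapsto X_u$ on $U$. First I would take two arbitrary elements of $F(Y)$, say $y_{u_1} \cdots y_{u_n}$ and $y_{u'_1} \cdots y_{u'_m}$, and assume $\beta$ sends them to the same element of $F_2$. By the definition $\beta(y_u) = X_u$ this means $X_{u_1} \cdots X_{u_n} \equiv X_{u'_1} \cdots X_{u'_m}$, and since $F_2 = F(x_1,x_2)$ is free this equality is in fact letter-for-letter.

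Next I would invoke the overlap machinery. Each factor $X_{u_i}$ and $X_{u'_j}$ lies in the exponential set $\mathcal{M}$ produced by Lemma \ref{kul}, which satisfies the overlap property of Definition \ref{d1}. Thus Lemma \ref{lo} applies to the common word $V \equiv X_{u_1} \cdots X_{u_n} \equiv X_{u'_1} \cdots X_{u'_m}$ (taking the prefix and suffix in the statement of Lemma \ref{lo} to be empty), forcing $n = m$ together with $X_{u_i} \equiv X_{u'_i}$ for every $i$.

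Finally I would pass from equality of the $X_{u_i}$ back to equality of the subscripts using injectivity of $\psi$. Recall that $\psi$ was built to be injective: the words $X_u$ are chosen successively along a ShortLex order on $U$ with the requirement $X_u \ne X_{u'}$ whenever $u' < u$. Hence $X_{u_i} \equiv X_{u'_i}$ yields $u_i = u'_i$ for each $i$, whence $y_{u_1} \cdots y_{u_n} = y_{u'_1} \cdots y_{u'_m}$ in $F(Y)$, which is exactly injectivity of $\beta$.

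The one point deserving real care — and the step I expect to be the only genuine obstacle in a fully rigorous write-up — is the appeal to injectivity of $\psi$ in the last paragraph: the map is \emph{a priori} only well-defined, and its injectivity depends entirely on the explicit ``smallest as yet unused $X_u$'' selection rule fixed during the construction. Everything preceding it is a direct transcription of the proof of Lemma \ref{di}, with $\psi$ in the role formerly played by $g \mapsto X_g$, so no new ideas beyond the overlap lemma are required.
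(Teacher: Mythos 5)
Your proposal is correct and follows exactly the route the paper takes: the paper's proof of Lemma \ref{ert} simply says it is proved "exactly similarly to Lemma \ref{di}," citing the same two facts you use — that $\{X_u\}_{u \in U} \subset \mathcal{M}$ inherits the overlap property so Lemma \ref{lo} applies, and that $u \mapsto X_u$ is injective. Your closing worry about the injectivity of $\psi$ is not an obstacle, since Lemma \ref{li} already supplies $\psi$ as an injection and the ShortLex selection rule only serves to make it recursive.
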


\begin{proof}
This fact is proved exactly similarly to Lemma \ref{di}. The application of Lemma \ref{lo} is still valid, because our set $\mathcal{M} \supset \{X_u\}_{u \in U}$ is exponential and satisfies the overlap property. Moreover, we have that the map $U \rightarrow F_2: u \rightarrow X_u$ is injective. These are the only facts used in the proof of Lemma \ref{di}.
\end{proof}

\begin{lemma}\label{gm}
The map $\gamma$ is a well-defined monomorphism. 
\end{lemma}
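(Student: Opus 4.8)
The plan is to verify three things in turn: that $\gamma$ is well defined, that it is a homomorphism, and that it is injective. The first two are formal consequences of the commutative diagram together with the fact that $S = \langle Y \mid \mathcal{R} \rangle$, while the last carries all the real content and mirrors the proof of Lemma \ref{lgi}.

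For well-definedness, recall that $\varepsilon = \zeta\eta$ is an epimorphism, so the formula $\gamma(g) = \overline{\varepsilon}\beta\varepsilon^{-1}(g)$ requires a choice of preimage, and I must show the result is independent of it. If $w_1, w_2 \in F(Y)$ both satisfy $\varepsilon(w_i) = g$, then $(w_1, w_2) \in \ker(\varepsilon)$, and since $\ker(\varepsilon)$ is the congruence generated by $\mathcal{R}$, there is a finite chain from $w_1$ to $w_2$ each step of which replaces a subword $a$ by $b$ for some $(a,b) \in \mathcal{R}$. Applying $\beta$ and using that $\beta$ is a homomorphism, each such step becomes a replacement of the subword $\beta(a)$ by $\beta(b)$; since $(\beta(a),\beta(b)) \in \beta(\mathcal{R}) \subseteq \xi$ and $\xi$ is a congruence, hence compatible with left and right multiplication, the two endpoints satisfy $(\beta(w_1),\beta(w_2)) \in \xi$, i.e. $\overline{\varepsilon}\beta(w_1) = \overline{\varepsilon}\beta(w_2)$. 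This is the same bookkeeping used in the paragraph preceding Lemma \ref{di}. Once $\gamma$ is well defined, the homomorphism property is immediate: choosing preimages $w, w'$ of $g, g'$ gives $ww'$ as a preimage of $gg'$, whence $\gamma(gg') = \overline{\varepsilon}\beta(ww') = \overline{\varepsilon}\beta(w)\,\overline{\varepsilon}\beta(w') = \gamma(g)\gamma(g')$.

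For injectivity, suppose $\gamma(g) = \gamma(g')$. Picking preimages $w, w'$ this says $(\beta(w), \beta(w')) \in \xi$, so there is a finite chain $\beta(w) = W_0 \to W_1 \to \cdots \to W_r = \beta(w')$ in $F_2$, each arrow applying a defining relation from $\beta(\mathcal{R})$. I would argue by induction that every $W_i$ is a product of words from $\{X_u\}_{u \in U} \subseteq \mathcal{M}$: the initial word $\beta(w)$ is such a product by definition of $\beta$, and when a relation $\beta(a) = \beta(b)$ is applied, both sides are themselves products of $X_u$'s, so by the overlap property (Lemma \ref{lo}) the occurrence of $\beta(a)$ inside $W_i$ aligns exactly with the block boundaries of its unique $\mathcal{M}$-decomposition, and replacing that block by $\beta(b)$ again yields a product of $X_u$'s, exactly as in Lemma \ref{lw}. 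Having this, I attach to each $W_i$ the element $\varepsilon(\beta^{-1}(W_i)) \in S$, which is legitimate because $\beta$ is injective (Lemma \ref{ert}) and the $\mathcal{M}$-decomposition is unique. Each elementary step replaces $\beta(a)$ by $\beta(b)$ with $(a,b) \in \mathcal{R}$, hence $\varepsilon(a) = \varepsilon(b)$, so this attached element is invariant along the chain; evaluating at the endpoints gives $g = \varepsilon(w) = \varepsilon(w') = g'$.

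The main obstacle is the induction step showing that each $W_i$ stays a product of $X_u$'s and that this decomposition is canonical; everything else is diagram-chasing. This is precisely where the overlap property of $\mathcal{M}$ is indispensable: without it, the subword $\beta(a)$ could straddle the $X_u$-blocks of $W_i$, and the replacement would destroy the block structure, breaking both the recursion and the definition of the invariant. Lemma \ref{lo} is exactly the tool that rules this out, so once its hypotheses (an exponential set with the overlap property, verified for $\{X_u\}_{u \in U}$ in the proof of Lemma \ref{ert}) are in place, the argument goes through verbatim as in Lemma \ref{lgi}.
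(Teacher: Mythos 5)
Your proof is correct and follows essentially the same route as the paper, which simply defers to the well-definedness argument of Section \ref{constructing} and the injectivity argument of Lemma \ref{lgi}; your chain-of-elementary-replacements bookkeeping for well-definedness and your subscript-product invariant for injectivity are exactly those arguments transported to the new setting. If anything, you supply a detail the paper leaves implicit, namely that Lemma \ref{lo} (the overlap property) is what guarantees each intermediate word in the chain remains a product of blocks $X_u$ so that the invariant is defined.
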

\begin{proof}
The fact that $\gamma$ does not depend on the choice of preimage under $\varepsilon$ of $g \in S$ follows exactly as the proof of the same fact in Section \ref{constructing}. Moreover, $\gamma$ is injective. The proof is similar to that of Lemma \ref{lgi}. 
\end{proof}

\begin{lemma}\label{recur}
The semigroup $H$ is recursively presented.
\end{lemma}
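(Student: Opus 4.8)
The plan is to show that the congruence $\xi$ on $F_2$ is recursively enumerable, since a semigroup is recursively presented precisely when it admits a generating set together with a recursively enumerable set of defining relations, and here $H = F_2/\xi$ with the fixed finite generating set $\{x_1,x_2\}$. Thus it suffices to prove that the defining set $\beta(\mathcal{R})$ is recursively enumerable, because the congruence generated by a recursively enumerable set of pairs is itself recursively enumerable (one enumerates all finite chains of one-step rewrites). So the real content is the recursive enumerability of $\beta(\mathcal{R})$.

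To see this, I would trace back through the construction of $\mathcal{R}$. Recall $\mathcal{R}$ is the kernel presentation arising from $S \cong F(Y)/\ker(\varepsilon)$ with $\varepsilon = \zeta\eta$, and $\beta(\mathcal{R})=\{(\beta(a),\beta(b)):(a,b)\in\mathcal{R}\}$. First I would observe that each building block of the construction is effective: the set $T$ is recursively enumerable by hypothesis $(D3)$, so its projection $U \subset F_n$ is recursively enumerable, and the function $\phi:U\to F_m$ (which picks out the first pair in the enumeration of $T$ with a given second coordinate) is a partial recursive function. Next, the map $\psi:F_n\to\mathcal{M}$, $u\mapsto X_u$, was explicitly chosen to be recursive in the paragraph defining it (order $U$ by ShortLex and pick the least admissible $X_u$ satisfying (\ref{e6})), so $\beta(y_u)=X_u$ is computable from $u$. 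The key remaining point is that one can enumerate a set of defining pairs for $\ker(\varepsilon)$: the relations in $\mathcal{R}$ come exactly from pairs $y_u,y_{u'}$ with $\varepsilon(y_u)=\varepsilon(y_{u'})$, and by Properties (1)--(2) of $(D3)$ the equality $\pi(\phi(u))=\pi(\phi(u'))$ is witnessed by the enumeration of $T$: whenever $\phi(u)$ and $\phi(u')$ represent the same element of $S$, there is some $w$ with $(\phi(u),w),(\phi(u'),w)\in T$, and this witness appears in the recursive enumeration of $T$.

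Concretely, I would describe a semidecision procedure that dovetails the enumeration of $T$: as new pairs $(v,u)$ appear, maintain the partial graph of $\phi$, and whenever two enumerated words $\phi(u),\phi(u')$ are found to share a common second coordinate in $T$ (hence represent the same element of $S$), output the defining relation $(\beta(y_u),\beta(y_{u'}))=(X_u,X_{u'})$. Since the ShortLex-driven choice of $\psi$ makes $X_u$ computable from $u$, this procedure outputs a recursively enumerable set of pairs in $F_2\times F_2$, and it generates the same congruence as $\beta(\mathcal{R})$ because it captures exactly the relations forcing $\varepsilon(y_u)=\varepsilon(y_{u'})$. Therefore $\xi$ is recursively enumerable and $H$ is recursively presented.

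The main obstacle I anticipate is bookkeeping the logical equivalence between "$\phi(u),\phi(u')$ represent the same element of $S$" and the existence of an enumerable witness, so that the enumerated set of pairs genuinely generates $\ker(\varepsilon)$ under $\beta$ rather than merely a subset of it. This requires careful use of all three parts of $(D3)$ — Part (2) guarantees that every coincidence in $S$ is witnessed in $T$, which is what ensures completeness of the enumeration — and care that the partial recursiveness of $\phi$ (it need not be total, as $U$ is only recursively enumerable) does not obstruct the dovetailing. Once that correspondence is pinned down, the recursive enumerability follows routinely.
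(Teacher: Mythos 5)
There is a genuine gap, and it is not the one you flagged at the end. You assert that ``the relations in $\mathcal{R}$ come exactly from pairs $y_u,y_{u'}$ with $\varepsilon(y_u)=\varepsilon(y_{u'})$,'' and your semidecision procedure accordingly outputs only pairs $(X_u,X_{u'})$ coming from coincidences between single generators of $F(Y)$. But the congruence on $F(Y)$ generated by all such generator--generator coincidences is in general much smaller than $\ker(\varepsilon)$: its quotient is the free semigroup on the set of $\varepsilon$-classes of generators, not $S$. For instance, a relation of the form $y_{u_1}y_{u_2}=y_{u_3}$ with $\varepsilon(y_{u_1})\varepsilon(y_{u_2})=\varepsilon(y_{u_3})$ in $S$ is needed for $F(Y)/\ker(\varepsilon)\cong S$ (and for $\gamma$ to be well defined on $H$), yet it cannot be derived from any chain of single-letter substitutions $y_u\to y_{u'}$, since those preserve word length in $F(Y)$. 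So the set of pairs your procedure enumerates generates a proper subcongruence of $\xi$, and the resulting quotient of $F_2$ would not be $H$. The completeness issue you do raise --- whether every coincidence $\varepsilon(y_u)=\varepsilon(y_{u'})$ is witnessed in $T$ --- is real but is handled by $(D3)$ Part (2) exactly as you say; the missing idea is that you must enumerate relations between arbitrary \emph{words} in the $y_u$, not just between generators.

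The paper's proof does precisely this: it takes $\mathcal{R}$ to be (in effect) all of $\ker(\varepsilon)$, observes that $\bigl(w(y_{u_1},\dots,y_{u_s}),\,w'(y_{u_1'},\dots,y_{u_t'})\bigr)\in\ker(\varepsilon)$ if and only if the corresponding words $w(v_1,\dots,v_s)$ and $w'(v_1',\dots,v_t')$ (with $v_i=\phi(u_i)$) are equal in $S$ over the alphabet $\{a_1,\dots,a_m\}$, notes that the relations of $S$ over that alphabet are recursively enumerable by $(D3)$ Parts (1)--(2), and then recovers all admissible decompositions $w,w',v_1,\dots,v_t'$ of a given relation of $S$ by a bounded search. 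To repair your argument you would need to either adopt this enumeration of the full kernel, or replace your generating set by a genuinely sufficient one (e.g.\ all generator coincidences \emph{together with} all multiplication-table pairs $(y_{u_1}y_{u_2},y_{u_3})$, in the spirit of Lemma \ref{lp}) and verify that these too can be semidecided from the enumeration of $T$. Everything else in your outline --- the reduction to recursive enumerability of a defining set, the recursiveness of $\psi$ and hence of $\beta$, and the use of dovetailing over $T$ --- is sound and matches the paper.
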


\begin{proof}
The set of defining relations for $H$ is $\xi=\beta(\mathcal{R})$. Because the map $\psi: F_n \rightarrow F_2$ was chosen to be recursive, and by definition of $\beta$, it suffices to show that the relations $\mathcal{R}$ are recursively enumerable. First observe that the set of relations of $S$ in generators $\{a_1, \dots, a_m\}$ is recursively enumerable. We have by Condition $(D3)$, Parts $(1)$ and $(2)$ that $v_1(a_1, \dots, a_m)=v_2(a_1, \dots, a_m)$ in $S$ if and only if 
both $(v_1,u),(v_2,u) \in T$ for some $u$. Therefore, because $T$ is recursively enumerable, so is the set of relations of $S$. 
Then we have that $(w(y_{u_1}, \dots, y_{u_s}),w'(y_{u_1'}, \dots, y_{u_t'})) \in \ker(\varepsilon)$ if and only if 
\begin{equation}\label{e7}
\zeta w(x_{\phi u_1}, \dots, x_{ \phi u_s})=\zeta w'(x_{\phi u_1'}, \dots, x_{\phi u_t'}) \textrm{ in } S.
\end{equation}
 Thus we have to enumerate such pairs $(w,w')$. To do this, we enumerate all variables of the form $x_{\phi u}$ with $u \in U$. This is possible by definition of $\phi$ and $U$ and by the fact that $T$ is recursively enumerable. Next, we enumerate all pairs $(w(x_{v_1}, \dots, x_{v_s}), w'(x_{v_1'}, \dots, x_{v_t'}))$ with $\zeta(w)=\zeta(w')$. This is possible because $\zeta(w)=\zeta(w')$ if and only if $\pi (w(v_1, \dots, v_s)) = \pi(w'(v_1', \dots, v_t'))$ if and only if 
$$ w(v_1(a_1, \dots, a_m), \dots, v_s(a_1, \dots, a_m)) =$$
 \begin{equation}\label{e8}
 w'(v_1'(a_1, \dots, a_m), \dots, v_t'(a_1, \dots, a_m)).
 \end{equation}
  We have already seen that the set of all relations of $S$ is recursively enumerable. Given any relation in $S$ in generators $\{a_1, \dots, a_m\}$, we may find all possible $w,w',v_1, \dots, v_t'$ such that the relation may be presented as it is written in equation (\ref{e8}). There is an algorithm which can do this because the lengths of possible $w,w',v_1, \dots, v_t'$ are bounded by the length of the given relation of $S$. To complete the proof it suffices to compare these two lists to obtain a list of all pairs $(w,w')$ satisfying equation (\ref{e7}).
\end{proof}

\begin{proof} of Theorem \ref{d4}.\\

We first suppose that $S$ is a semigroup with finite generating set $\mathcal{A}=\{a_1, \dots, a_m\}$ and that a function $l: S \rightarrow \mathbb{N}$ satisfies conditions $(D1)-(D3)$. Lemmas \ref{gm} and \ref{recur} show that there is an embedding $S \rightarrow H$ to a recursively presented and $2$-generated semigroup. By Theorem \ref{three}, it suffices to prove that the function $l:S \rightarrow \mathbb{N}$ is equivalent to the word length on $H$ restricted to $S$. Let $g = \pi(v) \in S$. By Part $(3)$ of Condition $(D3)$, there exists a word $u \in F_n$ such that $||u||=l(g)$. Let $v'=\phi(u)$. We have that $\pi(v)=\pi(v')$ by Part $(1)$ of Condition $(D3)$ and by the definition of $\phi$. Then $\varepsilon(y_u)=\pi(\phi(u))=\pi(v')=\pi(v)=g.$ Therefore, by definition we have that $\gamma(g)=\overline{\varepsilon}\beta(y_u)=\overline{\varepsilon}(X_u)$, and so 
\begin{equation}\label{five}
|\gamma(g)|_H \leq ||X_u|| \leq d||u||=dl(g).
\end{equation}
The reverse inequality follows exactly from the arguments of the Proof of Theorem \ref{t1} Part $(1)$, which only uses the overlap property, Lemma \ref{ert} and the replacing of inequalities $(\ref{six})$ by $(\ref{five})$ and the definition of $H_S$ by the free semigroup $\{X_u\}_{u \in U}.$

To prove the converse, suppose that $S$ is a subsemigroup of $H$ with generating set $\mathcal{B}=\{b_1, \dots, b_m\}$. We must show that $$l:S \rightarrow \mathbb{N}: l(g) = |g|_{\mathcal{B}}$$ satisfies condition $(D3)$. Since $H$ is finitely presented, the collection $T \subset F_m \times F_n$ defined by $$T=\{ (v,u) : v(a_1, \dots, a_m)=u(b_1, \dots, b_n) \textrm{ in } H\}$$ is recursively enumerable. Condition $(D3)$ Part $(1)$ is satisfied because if \\
$(v_1,u),(v_2,u) \in T$ then $v_1(a_1, \dots, a_m)=u(b_1, \dots, b_n)$ in $H$, and \\ $v_2(a_1, \dots, a_m)=u(b_1, \dots, b_n)$ in $H$. Therefore, since the map $S \rightarrow H$ is an injection, we have that $v_1(a_1, \dots, a_m)=v_2(a_1, \dots, a_m)$ in $S$. To see that Condition $(D3)$, Part $(2)$ is satisfied, suppose $v_1=v_2$ in $S$ and let $v_1(a_1, \dots, a_m) \in H$. Then we may write $v_1$ with respect to the generating set $\mathcal{B}$ of $H$; that is, there exists $u \in H$ with $v_1(a_1, \dots, a_m)=u(b_1, \dots, b_n)$. Now consider words $u(x_1, \dots, x_n) \in F_n, v_1(y_1, \dots, y_m) \in F_m$, where $F_n$ has basis $\{x_1, \dots, x_n\}$ and $F_m$ has basis $\{y_1, \dots, y_n\}$.  We have that $(v_1,u),(v_2,u) \in T$ because $u(b_1, \dots, b_n)=v_1(a_1, \dots, a_m)=v_2(a_1, \dots, a_m)$. To see that condition $(D3)$ Part $(3)$ is satisfied, let $v=v(y_1, \dots, y_m) \in F_m$. Then $$l^{*}(v)=l(v(a_1, \dots, a_m))=|v(a_1, \dots, a_m)|_{\mathcal{B}}$$  $$ =\min\{||u|| : u=v \textrm{ in } H\}=\min\{||u||: (v,u) \in T\}.$$
\end{proof}

\subsection*{Acknowledgements}
The author wishes to express gratitude to Alexander Olshanskii for his guidance and many valuable suggestions.

\noindent Tara C. Davis\\
Department of Mathematics\\
Vanderbilt University\\
1326 Stevenson Center\\
Nashville, Tennessee 37240, United States of America\\
tara.c.davis@vanderbilt.edu\\

\end{document}